\theoremstyle{plain}
\newtheorem{theorem}                {Theorem}      [section]
\newtheorem{proposition}  [theorem]  {Proposition}
\newtheorem{corollary}    [theorem]  {Corollary}
\newtheorem{lemma}        [theorem]  {Lemma}
\theoremstyle{definition}
\newtheorem{remark}       [theorem]  {Remark}
\newtheorem{definition}   [theorem]  {Definition}
 \DeclareMathOperator{\id}{I}
\DeclareMathOperator{\Span}{span}
\DeclareMathOperator{\cst}{constant}
 \DeclareMathOperator{\im}{Im}
\numberwithin{equation}{section}
\begin{document}

\title[Surfaces with parallel mean curvature vector]
{Surfaces with parallel mean curvature in $\mathbb{C}P^n\times\mathbb{R}$ and $\mathbb{C}H^n\times\mathbb{R}$}

\author{Dorel~Fetcu}
\author{Harold~Rosenberg}

\address{Department of Mathematics\\
"Gh. Asachi" Technical University of Iasi\\
Bd. Carol I no. 11 \\
700506 Iasi, Romania} \email{dfetcu@math.tuiasi.ro}

\curraddr{IMPA\\ Estrada Dona Castorina\\ 110, 22460-320 Rio de
Janeiro, Brasil} \email{dorel@impa.br}

\address{IMPA\\ Estrada Dona Castorina\\ 110, 22460-320 Rio de
Janeiro, Brasil} \email{rosen@impa.ro}


\begin{abstract} We consider surfaces with parallel mean curvature vector (pmc surfaces) in $\mathbb{C}P^n\times\mathbb{R}$ and $\mathbb{C}H^n\times\mathbb{R}$, and, more generally, in
cosymplectic space forms. We introduce a holomorphic quadratic differential on such surfaces.
This is then used in order to show that the anti-invariant pmc $2$-spheres of a $5$-dimensional non-flat
cosymplectic space form of product type are actually the embedded rotational spheres $S_H^2\subset\bar M^2\times\mathbb{R}$ of Hsiang and Pedrosa, where $\bar M^2$ is a complete
simply-connected surface with constant curvature. When the ambient space is a cosymplectic space form of product type and its dimension is greater than $5$, we prove that an immersed non-minimal non-pseudo-umbilical anti-invariant $2$-sphere lies in a product space $\bar M^4\times\mathbb{R}$, where $\bar M^4$ is a space form. We also provide a reduction of codimension theorem for the pmc surfaces of a non-flat cosymplectic space form.
\end{abstract}

\date{}

\subjclass[2000]{53A10, 53C42, 53C55}

\keywords{surfaces with parallel mean curvature vector, cosymplectic
space forms, quadratic forms}

\maketitle

\section{Introduction}

Surfaces with constant mean curvature (cmc surfaces) in $3$-dimensional ambient spaces have been intensively studied in the last six decades and a very useful tool proved to be the holomorphic quadratic forms defined on such surfaces.

In 1951, H. Hopf used for the first time a holomorphic quadratic form in order to show that any cmc surface in a Euclidean space, homeomorphic to a sphere, is actually a round sphere (see \cite{HH}) and then his result was extended to cmc surfaces in $3$-dimensional space forms by S.-S.~Chern, in \cite{C}. When the codimension is greater than $1$, a natural generalization of cmc surfaces are surfaces with parallel mean curvature vector (pmc surfaces). These surfaces are studied since the early seventies, among the first papers to treat this subject being \cite{DF} by D.~Ferus, \cite{CL} by B.-Y.~Chen and G.~D.~Ludden, \cite{DH} by D.~A.~Hoffman and \cite{Y} by S.-T.~Yau. In this last paper it is proved that a pmc surface immersed in a space form either lies in a totally geodesic $3$-dimensional space or it is a minimal surface of an umbilical hypersurface.

The next natural step was taken by U. Abresch and H. Rosenberg, who studied in \cite{AR,AR2} cmc surfaces and obtained Hopf type results in product spaces of type $M^2(\rho)\times\mathbb{R}$, where $M^2(\rho)$ is a complete simply-connected surface with constant curvature $\rho$, as well as the homogeneous $3$-manifolds $Nil(3)$, $\widetilde{PSL(2,\mathbb{R})}$ and Berger spheres. As for the study of pmc surfaces in product spaces of type $M^n(\rho)\times\mathbb{R}$, where $M^n(\rho)$ is a space form with constant sectional curvature $\rho$, the papers \cite{AdCT1} and \cite{AdCT} by H. Alencar, M. do Carmo and R. Tribuzy, are devoted to this subject. The principal tool they use is a holomorphic quadratic form, which in the $3$-dimensional case is just the Abresch-Rosenberg differential, introduced in \cite{AR}. In \cite{AdCT} the authors proved, amongst others, a very nice reduction of the codimension theorem, showing that a pmc surface immersed in $M^n(\rho)\times\mathbb{R}$ is either a minimal surface in a totally umbilical hypersurface of $M^n(\rho)$; a cmc surface in a $3$-dimensional totally umbilical submanifold, or in a totally geodesic submanifold of $M^n(\rho)$; or it lies in $M^4(\rho)\times\mathbb{R}$. In the recent paper \cite{F} a similar result is proved for pmc surfaces immersed in a complex space form, i.e. a K\"ahler manifold with constant holomorphic sectional curvature. There it is shown that a non-minimal pmc surface immersed in a non-flat complex space form $N^n(\rho)$, where $\rho$ is the (constant) holomorphic sectional curvature and $n\geq 3$, is a pseudo-umbilical totally real surface or it lies in a complex space form $N^r(\rho)$, with $r\leq 5$.

The products between a complex space form and a one dimensional manifold are the main examples of cosymplectic space forms, which are often seen as the odd-dimensional version of complex space forms. Therefore, working in such spaces seems to be the natural continuation of \cite{F}. The other option for odd-dimensional ambient spaces with nice curvature properties is represented by the Sasakian space forms, amongst them being the odd-dimensional spheres and the generalized Heisenberg group. Although the present paper is devoted to the study of pmc surfaces in cosymplectic space forms it is sure that interesting results could be also obtained by considering this second option.

The paper is organized as follows. In Section \ref{sintro} we briefly recall
some general facts about the cosymplectic space forms, as they are presented in \cite{ABC,B,BG,CdLM}. In Section \ref{sqf} we introduce a quadratic form $Q$ defined on surfaces immersed in such a space and prove that its
$(2,0)$-part is holomorphic when the mean curvature vector of the
surface is parallel. In Section \ref{scyl} we characterize the pmc surfaces of type $\Sigma^2=\pi^{-1}(\gamma)$ in a product space $M^n(\rho)\times\mathbb{R}$, where $M^n(\rho)$ is a complex space form, $\pi:M^n(\rho)\times\mathbb{R}\rightarrow M^n(\rho)$ is the projection map and $\gamma:I\rightarrow M^n(\rho)$ is a Frenet curve of osculating order $r$ in $M^n(\rho)$. We also prove that such surfaces with vanishing $(2,0)$-part of $Q$ exist if and only if $\rho<0$.
The main result of Section \ref{sred} is a reduction theorem, which
states that a non-minimal pmc surface $\Sigma^2$ in a non-flat cosymplectic space form $N^{2n+1}(\rho)$ either is pseudo-umbilical and then the characteristic vector field is orthogonal to $\Sigma^2$ and the surface is anti-invariant, or it
is not pseudo-umbilical and lies in a totally geodesic invariant submanifold of $N^{2n+1}(\rho)$ with dimension less than or equal to $11$. The last Section is devoted to the study of anti-invariant pmc surfaces. We prove that any non-minimal anti-invariant pmc $2$-sphere in $M^2(\rho)\times\mathbb{R}$ is an embedded rotationally invariant cmc sphere $S_H^2\subset\bar M^2(\frac{\rho}{4})\times\mathbb{R}$, where $\bar M^2(\frac{\rho}{4})$ is a complete simply-connected surface with constant curvature $\frac{\rho}{4}$, immersed as a totally-geodesic Lagrangian submanifold in the complex space form $M^2(\rho)$. When the dimension of the ambient space is greater than $5$, we show that a non-minimal non-pseudo-umbilical anti-invariant $2$-sphere immersed in $M^n(\rho)\times\mathbb{R}$ lies in a product space $\bar M^4(\frac{\rho}{4})\times\mathbb{R}$, where $\bar M^4(\frac{\rho}{4})$ is a space form immersed as a totally geodesic totally real submanifold in $M^n(\rho)$.

\section{Preliminaries}\label{sintro}

Let $M^n(\rho)$ be a complex space form with the complex structure $(J,\langle,\rangle_M)$, consider the product manifold $N^{2n+1}=M^n(\rho)\times\mathbb{R}$ and define the following tensors on $N^{2n+1}$:
$$
\varphi=J\circ d\pi,\quad\xi=\frac{\partial}{\partial t},\quad\eta=dt\quad\textnormal{and}\quad
\langle,\rangle_N=\langle,\rangle_M+dt\otimes dt,
$$
where $\pi:M^n(\rho)\times\mathbb{R}\rightarrow M^n(\rho)$ is the projection map and $t$ is the standard coordinate function on the real axis. Then $(N^{2n+1},\varphi,\xi,\eta,\langle,\rangle_N)$ is a cosymplectic space form with constant $\varphi$-sectional curvature equal to $\rho$ (see \cite{ABC,BG}). We shall explain what this means in the following.

An \textit{almost contact metric structure} on an odd-dimensional manifold
$N^{2n+1}$ is given by $(\varphi,\xi,\eta,\langle,\rangle)$, where $\varphi$ is
a tensor field of type $(1,1)$ on $N$, $\xi$ is a vector field,
$\eta$ is its dual $1$-form and $\langle,\rangle$ is a Riemannian metric such that
$$
\varphi^{2}U=-U+\langle U,\xi\rangle\xi\quad\textnormal{and}\quad
\langle\varphi U,\varphi V\rangle=\langle U,V\rangle-\eta(U)\eta(V),
$$
for all tangent vector fields $U$ and $V$.

\noindent An almost contact metric structure $(\varphi,\xi,\eta,\langle,\rangle)$ is
called {\it normal} if
$$
N_{\varphi}(U,V)+2d\eta(U,V)\xi=0,
$$
where
$$
N_{\varphi}(U,V)=[\varphi U,\varphi V]-\varphi \lbrack \varphi
U,V]-\varphi \lbrack U,\varphi V]+\varphi^{2}[U,V],
$$
is the Nijenhuis tensor field of $\varphi$.

An almost contact metric manifold $(N,\varphi,\xi,\eta,g)$ is a
\textit{cosymplectic manifold} if it is normal and both the $1$-form $\eta$ and the fundamental $2$-form $\Omega$, defined by $\Omega(U,V)=\langle U,\varphi V\rangle$, are closed. Equivalently, an almost contact metric manifold is cosymplectic if and only if $\varphi$ is parallel, i.e. $\nabla^N\varphi=0$, where $\nabla^N$ is the Levi-Civita connection. This implies that also the vector field $\xi$ and the $1$-form $\eta$ are parallel. We note that a cosymplectic manifold has a natural local product structure as a product of a K\"ahler manifold and a $1$-dimensional manifold but there exist compact cosymplectic manifolds which are not global products (see \cite{B,CdLM}). We also recall that a submanifold $M$ of a cosymplectic manifold is called \textit{invariant} when $\varphi(TM)\subset TM$ and \textit{anti-invariant} when $\varphi(TM)\subset NM$, where $NM$ is the normal bundle of $M$.

Let $(N,\varphi,\xi,\eta,\langle,\rangle)$ be a cosymplectic manifold. The
sectional curvature of a $2$-plane generated by $U$ and $\varphi U$,
where $U$ is a unit vector orthogonal to $\xi$, is called
\textit{$\varphi$-sectional curvature} determined by $U$. A cosymplectic
manifold with constant $\varphi$-sectional curvature $\rho$ is called a
\textit{cosymplectic space form} and is denoted by $N(\rho)$. The curvature
tensor field of a cosymplectic space form $N(\rho)$ is given by
\begin{equation}\label{eq:curv}
\begin{array}{lcl}
R^N(U,V)W&=&\frac{\rho}{4}\{\langle V,W\rangle U-\langle U,W\rangle V+\langle U,\varphi W\rangle \varphi V-\langle V,\varphi W\rangle\varphi U\\
\\&&+2\langle U,\varphi V\rangle\varphi W+\eta(U)\eta(W)V-\eta(V)\eta(W)U\\ \\&&+\langle U,W\rangle\eta(V)\xi-\langle V,W\rangle\eta(U)\xi\}.
\end{array}
\end{equation}

\section{A quadratic form with holomorphic $(2,0)$-part}\label{sqf}

Although our main interest is to study the immersed pmc surfaces in product spaces of type $M^n(\rho)\times\mathbb{R}$, where $M^n(\rho)$ is a complex space form, it is more convenient to treat the more general case where the surfaces are immersed in an arbitrary cosymplectic space forms.

Let $\Sigma^2$ be an immersed surface in a cosymplectic space form $N^{2n+1}(\rho)$,  endowed with the cosymplectic
structure $(\varphi,\xi,\eta,\langle,\rangle)$ and having constant $\varphi$-sectional curvature $\rho$.

\begin{definition} If the mean curvature vector $H$ of the surface $\Sigma^2$ is
parallel in the normal bundle, i.e.
$\nabla^{\perp}H=0$, the normal connection $\nabla^{\perp}$ being
defined by the equation of Weingarten
$$
\nabla^{N}_XV=-A_VX+\nabla^{\perp}_XV,
$$
for any vector field $X$ tangent to $\Sigma^2$ and any vector
field $V$ normal to the surface, where $\nabla^{N}$ is the
Levi-Civita connection on $N$ and $A$ is the shape operator, then $\Sigma^2$ is called a \textit{pmc surface}.
\end{definition}

We define a quadratic form $Q$ on
$\Sigma^2$ by
$$
Q(X,Y)=8|H|^2\langle\sigma(X,Y),H\rangle-\rho|H|^2\eta(X)\eta(Y)+3\rho\langle \varphi X,
H\rangle\langle \varphi Y, H\rangle,
$$
where $\sigma$ is the second fundamental form of the surface,
and claim that the $(2,0)$-part of $Q$ is holomorphic.

In order to prove this, we first consider the isothermal coordinates
$(u,v)$ on $\Sigma^2$. Then $ds^2=\lambda^2(du^2+dv^2)$ and let us define
$z=u+iv$, $\widehat z=u-iv$, $dz=\frac{1}{\sqrt{2}}(du+idv)$, $d\widehat
z=\frac{1}{\sqrt{2}}(du-idv)$ and
$$
Z=\frac{1}{\sqrt{2}}\Big(\frac{\partial}{\partial
u}-i\frac{\partial}{\partial v}\Big),\quad \widehat Z=\frac{1}{\sqrt{2}}\Big(\frac{\partial}{\partial
u}+i\frac{\partial}{\partial v}\Big).
$$
We get $\langle Z,\widehat Z\rangle=\langle\frac{\partial}{\partial
u},\frac{\partial}{\partial
u}\rangle=\langle\frac{\partial}{\partial
v},\frac{\partial}{\partial v}\rangle=\lambda^2$. We mention that this rather unusual notation for the conjugation is used only for the reader's convenience.

Now, we shall compute
$$
\widehat Z(Q(Z,Z))=\widehat Z(8|H|^2\langle\sigma(Z,Z),H\rangle-\rho|H|^2(\eta(Z))^2+3\rho\langle \varphi Z, H\rangle^2).
$$

We have
$$
\begin{array}{ll}
\widehat Z(\langle\sigma(Z,Z),H\rangle)&=\langle\nabla^N_{\widehat Z}\sigma(Z,Z),H\rangle+\langle\sigma(Z,Z),\nabla^N_{\widehat Z}H\rangle\\
\\&=\langle\nabla^{\perp}_{\widehat Z}\sigma(Z,Z),H\rangle+\langle\sigma(Z,Z),\nabla^{\perp}_{\widehat Z}H\rangle\\ \\&=\langle(\nabla^{\perp}_{\widehat Z}\sigma)(Z,Z),H\rangle+\langle\sigma(Z,Z),\nabla^{\perp}_{\widehat Z}H\rangle,
\end{array}
$$
since
$$
(\nabla^{\perp}_{\widehat Z}\sigma)(Z,Z)=\nabla^{\perp}_{\widehat Z}\sigma(Z,Z)-2\sigma(\nabla_{\widehat Z}Z,Z)=\nabla^{\perp}_{\widehat Z}\sigma(Z,Z)
$$
and $\nabla_{\widehat Z}Z=0$, from the definition of the connection $\nabla$ on the
surface.

Next, using the Codazzi equation, we get
\begin{equation}\label{eq:1}
\begin{array}{lll}
\widehat Z(\langle
\sigma(Z,Z),H\rangle)&=&\langle(\nabla^{\perp}_{Z}\sigma)(\widehat Z,Z),H\rangle+\langle (R^N(\widehat Z,Z)Z)^{\perp},H\rangle\\ \\&&+\langle\sigma(Z,Z),\nabla^{\perp}_{\widehat Z}H\rangle\\
\\&=&\langle(\nabla^{\perp}_{Z}\sigma)(\widehat Z,Z),H\rangle+\langle R^N(\widehat Z,Z)Z,H\rangle+\langle\sigma(Z,Z),\nabla^{\perp}_{\widehat Z}H\rangle.
\end{array}
\end{equation}

From the expression \eqref{eq:curv}, of the curvature tensor field of $N$,
it follows
\begin{equation}\label{eq:2}
\langle R^N(\widehat Z,Z)Z,H\rangle=\frac{\rho}{4}\{\langle Z,\widehat Z\rangle\eta(Z)\eta(H)+3\langle\widehat Z,\varphi Z\rangle\langle H,\varphi Z\rangle\}.
\end{equation}

Working just like in \cite{AdCT} (or in \cite{F}), we can prove that
\begin{equation}\label{eq:3}
\langle(\nabla^{\perp}_{Z}\sigma)(\widehat Z,Z),H\rangle=\langle\widehat Z,Z\rangle\langle\nabla^{\perp}_ZH,H\rangle.
\end{equation}

Indeed, if we consider the unit vector fields $e_1$ and $e_2$
corresponding to $\frac{\partial}{\partial u}$ and
$\frac{\partial}{\partial v}$, respectively, then we get $Z=\frac{\lambda}{\sqrt{2}}(e_1-ie_2)$ and
$$
\sigma(\widehat
Z,Z)=\frac{\lambda^2}{2}\sigma(e_1-ie_2,e_1+ie_2)=\frac{\lambda^2}{2}(\sigma(e_1,e_1)+\sigma(e_2,e_2))=\langle\widehat Z,Z\rangle H.
$$

Since we also have $\nabla_ZZ=\frac{1}{\lambda^2}\langle\nabla_ZZ,\widehat Z\rangle Z$, it follows that
$$
\begin{array}{lll}
\langle(\nabla^{\perp}_{Z}\sigma)(\widehat Z,Z),H\rangle&=&\langle\nabla^N_Z\sigma(\widehat Z,Z),H\rangle-\langle\sigma(\nabla_Z\widehat Z, Z), H\rangle-\langle\sigma(\widehat Z,\nabla_ZZ), H\rangle\\ \\
&=&\langle\nabla^N_Z(\langle\widehat Z,Z\rangle H),H\rangle-\frac{1}{\lambda^2}\langle\nabla_ZZ,\widehat Z\rangle \langle\sigma(\widehat Z,Z), H\rangle
\\ \\&=&\langle\nabla^N_Z(\langle\widehat Z,Z\rangle H),H\rangle-\langle\nabla_ZZ,\widehat Z\rangle\langle H,H\rangle\\
\\&=&\langle\nabla_Z\widehat Z,Z\rangle\langle H,H\rangle+\langle\nabla_ZZ,\widehat Z\rangle\langle H,H\rangle\\
\\&&+\langle\widehat Z,Z\rangle\langle\nabla^{\perp}_ZH,H\rangle-\langle\nabla_ZZ,\widehat Z\rangle\langle H,H\rangle\\
\\&=&\langle\widehat Z,Z\rangle\langle\nabla^{\perp}_ZH,H\rangle.
\end{array}
$$

Replacing \eqref{eq:2} and \eqref{eq:3} in \eqref{eq:1}, and using the fact that $H$ is parallel,
it follows that
\begin{equation}\label{eq:term1}
\widehat Z(\langle
\sigma(Z,Z),H\rangle)=\frac{\rho}{4}\{\langle Z,\widehat Z\rangle\eta(Z)\eta(H)+3\langle\widehat Z,\varphi Z\rangle\langle H,\varphi Z\rangle\}.
\end{equation}

As the characteristic vector field $\xi$ is parallel, equation \eqref{eq:3} also implies that
\begin{equation}\label{eq:term2}
\widehat Z((\eta(Z))^2)=2\langle Z,\widehat Z\rangle\eta(Z)\eta(H).
\end{equation}

Finally, since $\nabla^N\varphi=0$ and $H$ is parallel, using $\nabla^N_{\widehat Z}Z=\sigma(\widehat Z,Z)=\langle\widehat Z,Z\rangle H$ and $(\varphi Z)^{\top}=\frac{1}{\lambda^2}\langle
\varphi Z,\widehat Z\rangle Z$, that can be easily checked, one obtains
\begin{equation}\label{eq:term3}
\begin{array}{lll}
\widehat Z(\langle\varphi Z,H\rangle^2)&=&2\langle
\varphi Z,H\rangle\{\langle\nabla^N_{\widehat Z}\varphi Z,H\rangle+\langle\varphi Z,\nabla^N_{\widehat Z}H\rangle\}\\
\\ &=&2\langle
\varphi Z,H\rangle\{\langle\varphi \nabla^N_{\widehat Z}Z,H\rangle+\langle\varphi Z,\nabla^N_{\widehat Z}H\rangle\}\\ \\&=&2\langle\varphi Z,H\rangle\{\langle\widehat Z,Z\rangle\langle\varphi H,H\rangle-\langle(\varphi Z)^{\top},A_H\widehat Z\rangle+\langle(\varphi Z)^{\perp},\nabla^{\perp}_{\widehat Z}H\rangle\}\\ \\
&=&-2\langle\varphi Z,H\rangle\langle\sigma((\varphi Z)^{\top},\widehat Z),H\rangle\\
\\&=&-2\langle \varphi Z,H\rangle\langle\varphi Z,\widehat Z\rangle|H|^2.
\end{array}
\end{equation}

From \eqref{eq:term1}, \eqref{eq:term2} and \eqref{eq:term3} we see that $\widehat Z(Q(Z,Z))=0$, and we
can state the following.

\begin{theorem}
If $\Sigma^2$ is an immersed pmc surface in a cosymplectic space form
$N^{2n+1}(\rho)$, then the
$(2,0)$-part of the quadratic form $Q$, defined on $\Sigma^2$ by
$$
Q(X,Y)=8|H|^2\langle\sigma(X,Y),H\rangle-\rho|H|^2\eta(X)\eta(Y)+3\rho\langle \varphi X,
H\rangle\langle \varphi Y, H\rangle,
$$
is holomorphic.
\end{theorem}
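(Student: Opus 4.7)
The plan is to prove holomorphicity of the $(2,0)$-part by working in isothermal coordinates $(u,v)$ and showing that $\widehat Z(Q(Z,Z))=0$, where $Z=\frac{1}{\sqrt{2}}(\partial_u-i\partial_v)$. Since $H$ is parallel we have $|H|^2$ constant, so $\widehat Z$ only acts on the three factors $\langle\sigma(Z,Z),H\rangle$, $(\eta(Z))^2$ and $\langle\varphi Z,H\rangle^2$. The strategy is to compute each of these three derivatives, show that their contributions fall into the same two scalar structures, namely $\langle Z,\widehat Z\rangle\,\eta(Z)\,\eta(H)$ and $\langle\widehat Z,\varphi Z\rangle\langle\varphi Z,H\rangle$, and verify that the numerical coefficients $8|H|^2$, $-\rho|H|^2$ and $3\rho$ are tuned precisely so that the sum vanishes.

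For the first factor I would differentiate using the Weingarten formula, substitute the Codazzi equation to exchange $(\nabla^{\perp}_{\widehat Z}\sigma)(Z,Z)$ for $(\nabla^{\perp}_{Z}\sigma)(\widehat Z,Z)+R^N(\widehat Z,Z)Z$, and then exploit the fact that $\sigma(\widehat Z,Z)=\langle\widehat Z,Z\rangle H$ (as $\sigma(Z,Z)$ is the traceless $(2,0)$-part of $\sigma$, while $\sigma(Z,\widehat Z)$ recovers $H$) together with $\nabla_{\widehat Z}Z=0$. The $(\nabla^{\perp}_{Z}\sigma)(\widehat Z,Z)$ piece reduces, via parallelism of $H$, to $\langle\widehat Z,Z\rangle\langle\nabla^{\perp}_ZH,H\rangle=0$. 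Plugging $\widehat Z,Z$ into the explicit curvature formula \eqref{eq:curv} and using $\eta(Z)=\eta(\widehat Z)$-independent scalar selections, the curvature contribution collapses to $\frac{\rho}{4}\bigl\{\langle Z,\widehat Z\rangle\eta(Z)\eta(H)+3\langle\widehat Z,\varphi Z\rangle\langle H,\varphi Z\rangle\bigr\}$.

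For the second factor I would use that $\eta$ is parallel in a cosymplectic manifold, so $\widehat Z(\eta(Z))=\eta(\nabla^N_{\widehat Z}Z)=\eta(\sigma(\widehat Z,Z))=\langle\widehat Z,Z\rangle\eta(H)$, giving $\widehat Z((\eta(Z))^2)=2\langle Z,\widehat Z\rangle\eta(Z)\eta(H)$. For the third factor the key inputs are $\nabla^N\varphi=0$, which lets one commute $\varphi$ past $\nabla^N_{\widehat Z}$, and the tangential decomposition $(\varphi Z)^\top=\frac{1}{\lambda^2}\langle\varphi Z,\widehat Z\rangle Z$, which combined with $\nabla^N_{\widehat Z}Z=\langle\widehat Z,Z\rangle H$ and $\nabla^{\perp}H=0$ yields $\widehat Z(\langle\varphi Z,H\rangle^2)=-2|H|^2\langle\varphi Z,H\rangle\langle\varphi Z,\widehat Z\rangle$ (the term $\langle\varphi H,H\rangle$ that appears drops out against $\langle(\varphi Z)^\top,A_H\widehat Z\rangle$).

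The main obstacle, beyond keeping the bookkeeping of tangential and normal components straight, is the delicate cancellation at the end: the contribution $8|H|^2\cdot\frac{\rho}{4}\cdot 2\langle Z,\widehat Z\rangle\eta(Z)\eta(H)$ from the curvature term must exactly cancel $-\rho|H|^2\cdot 2\langle Z,\widehat Z\rangle\eta(Z)\eta(H)$ from the $\eta$-term, and $8|H|^2\cdot\frac{\rho}{4}\cdot 3\langle\widehat Z,\varphi Z\rangle\langle H,\varphi Z\rangle$ must cancel $3\rho\cdot(-2|H|^2)\langle\varphi Z,H\rangle\langle\varphi Z,\widehat Z\rangle$. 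Once the two identities are verified, adding the three contributions yields $\widehat Z(Q(Z,Z))=0$, which is exactly the statement that the $(2,0)$-part $Q^{(2,0)}=Q(Z,Z)\,dz\otimes dz$ is holomorphic.
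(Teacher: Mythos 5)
Your proposal is correct and follows essentially the same route as the paper: the same isothermal-coordinate computation of $\widehat Z(Q(Z,Z))$, the same three-term split, and the same use of Codazzi, $\sigma(\widehat Z,Z)=\langle\widehat Z,Z\rangle H$, $\nabla^{\perp}H=0$, $\nabla^N\xi=0$ and $\nabla^N\varphi=0$, ending in the identical coefficient cancellation. One minor slip in your parenthetical on the third term: $\langle\varphi H,H\rangle$ vanishes on its own by skew-symmetry of $\varphi$, and the term $\langle(\varphi Z)^{\top},A_H\widehat Z\rangle$ does not cancel against it but is precisely what survives and yields $-|H|^2\langle\varphi Z,\widehat Z\rangle$; your stated final formula is nonetheless the right one.
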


\section{Vertical cylinders with parallel mean curvature vector in product spaces}\label{scyl}

Let $\gamma:I\subset\mathbb{R}\rightarrow M^n(\rho)$ be a curve parametrized by
arc-length in a complex space form with complex dimension $n$ and constant holomorphic sectional curvature $\rho$, i.e. $\mathbb{C}P^n(\rho)$, $\mathbb{C}^n$ or $\mathbb{C}H^n(\rho)$  as $\rho>0$, $\rho=0$ or $\rho<0$. The curve $\gamma$ is called a {\it Frenet curve of
osculating order} $r$, $1\leq r\leq 2n$, if there exist $r$
orthonormal vector fields $\{E_1=\gamma',\ldots,E_{r}\}$ along
$\gamma$ such that
\begin{equation}
\begin{cases}
\nabla^M_{E_{1}}E_{1}=\kappa_{1}E_{2} \\
\nabla^M_{E_{1}}E_{i}=-\kappa_{i-1}E_{i-1} + \kappa_{i}E_{i+1},
\quad \forall i=2,\dots,r-1, \\
\nabla^M_{E_{1}}E_{r}=-\kappa_{r-1}E_{r-1}
\end{cases}
\end{equation}
where $\{\kappa_{1},\kappa_{2},\kappa_{3},\ldots,\kappa_{r-1}\}$ are positive
functions on $I$ called the {\it curvatures} of $\gamma$ and
$\nabla^M$ denotes the Levi-Civita connection on $M^n(\rho)$.

A Frenet curve of osculating order $r$ is called a {\it helix of
order $r$} if $\kappa_i=\cst>0$ for $1\leq i\leq r-1$. A helix
of order $2$ is called a {\it circle}, and a helix of order $3$ is
simply called {\it helix}.

S.~Maeda and Y.~Ohnita defined in \cite{MO} the {\it
complex torsions} of the curve $\gamma$ by $\tau_{ij}=\langle
E_i, J E_j \rangle$, $1\leq i<j\leq r$, where $(J,\langle,\rangle)$ is the complex structure on $M^n(\rho)$. A helix of order $r$
is called a {\it holomorphic helix of order $r$} if all the complex
torsions are constant. It is easy to see that a circle is always a holomorphic circle.

In order to find examples of pmc surfaces we will focus our attention on the \textit{vertical cylinders} $\Sigma^2=\pi^{-1}(\gamma)$ in product spaces $M^n(\rho)\times\mathbb{R}$, where $\pi:M^n(\rho)\times\mathbb{R}\rightarrow M^n(\rho)$ is the projection map and $\gamma:I\rightarrow M^n(\rho)$ is a Frenet curve of osculating order $r$ in $M^n(\rho)$. For any vector field $X$ tangent to $M^n(\rho)$ we shall denote by $X^H$ its horizontal lift to $M^n(\rho)\times\mathbb{R}$. As for the Riemannian metrics on $M^n(\rho)$ and $M^n(\rho)\times\mathbb{R}$, we will use the same notation $\langle,\rangle$.

Obviously, $\{E_1^H,\xi\}$ is a local orthonormal frame on $\Sigma^2$ and $E_i^H$, $1<i\leq r$, are normal vector fields. Then the mean curvature vector $H$ is given by
$$
H=\frac{1}{2}(\sigma(E_1^H,E_1^H)+\sigma(\xi,\xi))=\frac{1}{2}\kappa_1E_2^H,
$$
where $\kappa_1=\kappa_1\circ\pi$ and we used the first Frenet equation for $\gamma$ and O'Neill's equation \cite{O}  in the case of cosymplectic space forms, i.e. $\nabla^N_{X^H}Y^H=(\nabla^M_XY)^H$, for any vector fields $X$ and $Y$ tangent to $M^n(\rho)$ (see also \cite{ABC}).

Next, from the second Frenet equation, we have
\begin{equation}\label{eq:PMC1}
\nabla^N_{E_1^H}H=\frac{1}{2}(\nabla^M_{E_1}(\kappa_1E_2))^H=\frac{1}{2}(\kappa_1'E_2-\kappa_1^2E_1+\kappa_1\kappa_2E_3)^H.
\end{equation}

It is easy to verify that $\nabla_{\xi}E_1^H=\nabla_{E_1^H}\xi=0$, where $\nabla$ is the connection on the surface, and then we get that $[\xi,E_1^H]=0$, which means $\nabla^N_{\xi}E_1^H=\nabla^N_{E_1^H}\xi=0$. Now, since from \eqref{eq:curv} it follows that $R^N(\xi,E_1^H)E_1^H=0$, we obtain
\begin{equation}\label{eq:PMC2}
\nabla^N_{\xi}H=\frac{1}{2}\nabla^N_{\xi}\nabla^N_{E_1^H}E_1^H=0.
\end{equation}

From \eqref{eq:PMC1} and \eqref{eq:PMC2} we see that $H$ is parallel if and only if either
\begin{itemize}
\item $\gamma$ is a geodesic in $M^n(\rho)$; or

\item $\gamma$ is a circle in $M^n(\rho)$ with the curvature $\kappa_1=2|H|=\cst>0$.
\end{itemize}
Obviously, in the first case, $\Sigma^2$ is a minimal surface. In the second case, the $(2,0)$-part of $Q$ vanishes if and only if
$$
16|H|^4+\rho|H|^2+3\rho\langle\varphi E_1^H,H\rangle^2=0,
$$
that is equivalent to
$$
4\kappa_1^2+\rho(1+3\tau_{12}^2)=0.
$$

Now, we can conclude.

\begin{proposition} A vertical cylinder $\Sigma^2=\pi^{-1}(\gamma)$ in $M^n(\rho)\times\mathbb{R}$ has non-zero parallel mean curvature vector and the $(2,0)$-part of the quadratic form $Q$ vanishes on $\Sigma^2$ if and only if $\rho<0$ and the curve $\gamma$ is a circle in $M^n(\rho)$ with the curvature $\kappa=\frac{1}{2}\sqrt{-\rho(1+3\tau^2)}$, where $\tau$ is the complex torsion of $\gamma$.
\end{proposition}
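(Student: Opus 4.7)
The plan is to verify the two conditions separately --- non-zero parallel mean curvature, and vanishing of the $(2,0)$-part of $Q$ --- using the natural frame adapted to a vertical cylinder. Since $\{E_1^H,\xi\}$ is a global orthonormal tangent frame on $\Sigma^2$ while $\{E_2^H,\dots,E_r^H\}$ lie in the normal bundle, both conditions should reduce to explicit scalar equations in the curvatures $\kappa_i$ of $\gamma$ and its complex torsion $\tau_{12}$. This makes the proposition amenable to a direct computation, and it is natural to expect the sign of $\rho$ to be forced at the very last step when one solves for $\kappa_1$.

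For the first condition I would read off $H=\tfrac12\kappa_1 E_2^H$ from the definition and then apply O'Neill's identity $\nabla^N_{X^H}Y^H=(\nabla^M_X Y)^H$ together with the Frenet relations to compute $\nabla^N_{E_1^H}H$ and $\nabla^N_\xi H$. The normal components of $\nabla^N_{E_1^H}H$ are $\tfrac12(\kappa_1' E_2+\kappa_1\kappa_2 E_3)^H$, so parallelism in the normal bundle demands $\kappa_1'=0$ and $\kappa_1\kappa_2=0$; imposing $H\neq 0$ rules out the geodesic alternative and forces $\gamma$ to be a circle with $\kappa_1=2|H|=\mathrm{const}>0$. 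The identity $\nabla^N_\xi H=0$ then follows immediately from $[\xi,E_1^H]=0$, $\nabla^N_\xi E_1^H=0$ and the vanishing $R^N(\xi,E_1^H)E_1^H=0$ read off from the curvature formula of a cosymplectic space form.

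For the second condition I would introduce the complex tangent $Z=\tfrac{1}{\sqrt 2}(E_1^H-i\xi)$ and evaluate each summand of $Q(Z,Z)$. Using $\sigma(E_1^H,E_1^H)=\kappa_1 E_2^H$, $\sigma(\xi,\xi)=0$, $\sigma(E_1^H,\xi)=0$ and $\varphi\xi=0$, the defining expression of $Q$ collapses to a polynomial in $\kappa_1$ and $\tau_{12}=\langle E_1,JE_2\rangle$. Setting it to zero --- which is equivalent to $4\kappa_1^2+\rho(1+3\tau_{12}^2)=0$ --- and solving for $\kappa_1$ yields $\kappa_1=\tfrac12\sqrt{-\rho(1+3\tau_{12}^2)}$, and positivity of the radicand forces $\rho<0$. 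Since every step in both computations is an equivalence, the converse direction is obtained for free.

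The only real subtlety lies in the first step: the expression $\nabla^N_{E_1^H}H$ also contains a tangential term $-\tfrac12\kappa_1^2 E_1^H$ which must be discarded when testing parallelism in the normal bundle, so it does not produce an extra constraint beyond $\kappa_1=2|H|$. Apart from this book-keeping, the Frenet apparatus, O'Neill's formula, and the standard projection identities for $\varphi$ and $\eta$ on a vertical cylinder make the rest of the proof a purely algebraic manipulation with no genuine obstacle.
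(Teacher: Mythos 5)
Your proposal is correct and follows essentially the same route as the paper: compute $H=\tfrac12\kappa_1E_2^H$, use O'Neill's identity and the Frenet equations to show parallelism forces $\gamma$ to be a circle with $\kappa_1=2|H|$, check $\nabla^N_\xi H=0$ via $[\xi,E_1^H]=0$ and $R^N(\xi,E_1^H)E_1^H=0$, and then reduce $Q(Z,Z)=0$ to $4\kappa_1^2+\rho(1+3\tau_{12}^2)=0$, whose solvability forces $\rho<0$. The only detail worth adding is that $\kappa_1\kappa_2=0$ with $\kappa_1>0$ means the osculating order is $2$ (the $\kappa_i$ are positive by definition), and that the complex torsion of a circle is automatically constant, so the formula for $\kappa$ is consistent.
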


\begin{remark} S. Maeda and T. Adachi proved in \cite{MA} that for any positive number $\kappa$ and for any number $\tau$, such that $|\tau|<1$, there exits a circle with curvature $\kappa$ and complex torsion $\tau$ in any complex space form. Therefore, for any $\rho<0$, we know that circles $\gamma$, like in the previous Proposition, do exist. Since $0\leq\tau^2\leq 1$ we get that $\frac{1}{2}\sqrt{-\rho}\leq\kappa\leq\sqrt{-\rho}$, which means that the mean curvature of a non-minimal pmc cylinder $\Sigma^2=\pi^{-1}(\gamma)$, with vanishing $(2,0)$-part of $Q$, satisfies $\frac{\sqrt{-\rho}}{4}\leq|H|\leq\frac{\sqrt{-\rho}}{2}$.
\end{remark}

\section{A reduction theorem}\label{sred}

Let $\Sigma^2$ be an immersed non-minimal pmc surface in a non-flat cosymplectic space
form $N^{2n+1}(\rho)$, $n\geq 2$.

\begin{lemma}\label{lemma_com} For any vector $V$ normal to $\Sigma^2$, which is also orthogonal to
$\varphi T\Sigma^2$ and to $\varphi H$, we have $[A_H,A_V]=0$, i.e. $A_H$
commutes with $A_V$.
\end{lemma}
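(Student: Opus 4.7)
The plan is to reduce the statement to the Ricci equation for the immersion. Since $\nabla^{\perp}H=0$, the normal curvature tensor satisfies $R^{\perp}(X,Y)H=0$ for every pair of tangent vectors $X,Y$, so the Ricci equation collapses to
$$
\langle[A_H,A_V]X,Y\rangle=\langle R^N(X,Y)H,V\rangle
$$
(up to a sign convention). The whole lemma therefore reduces to proving that the right-hand side vanishes for every $X,Y\in T\Sigma^2$ under the stated assumptions on $V$.

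To do this, I would substitute $W=H$ into the curvature formula \eqref{eq:curv}. Since $H$ is normal to $\Sigma^2$ and $X,Y$ are tangent, we have $\langle X,H\rangle=\langle Y,H\rangle=0$, which immediately kills four of the nine terms. What remains is
$$
R^N(X,Y)H=\frac{\rho}{4}\bigl\{\langle X,\varphi H\rangle\varphi Y-\langle Y,\varphi H\rangle\varphi X+2\langle X,\varphi Y\rangle\varphi H+\eta(X)\eta(H)Y-\eta(Y)\eta(H)X\bigr\}.
$$

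Pairing this expression with the normal vector $V$, the last two terms give rise to factors $\langle V,Y\rangle$ and $\langle V,X\rangle$, both zero because $V$ is normal. The middle term contributes $2\langle X,\varphi Y\rangle\langle V,\varphi H\rangle$, which vanishes by the hypothesis $V\perp\varphi H$. The first two terms contribute multiples of $\langle V,\varphi Y\rangle$ and $\langle V,\varphi X\rangle$, both zero because $V\perp\varphi T\Sigma^2$. Consequently $\langle R^N(X,Y)H,V\rangle=0$ for every tangent $X,Y$, and hence $[A_H,A_V]=0$.

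I do not foresee any substantial obstacle: the argument is a direct computation, and the three hypotheses on $V$ (normal to $\Sigma^2$, orthogonal to $\varphi T\Sigma^2$, and orthogonal to $\varphi H$) are precisely the conditions needed to annihilate the three distinct groups of terms remaining in $R^N(X,Y)H$ after the normality of $H$ has eliminated the first four. The only point requiring a small amount of care is keeping track of the sign in the Ricci equation, but since we are proving the right-hand side is zero, the sign is immaterial.
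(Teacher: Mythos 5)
Your proof is correct and follows exactly the paper's argument: the Ricci equation together with $R^{\perp}(X,Y)H=0$ (from $\nabla^{\perp}H=0$) reduces the claim to $\langle R^N(X,Y)H,V\rangle=0$, which is then checked term by term from \eqref{eq:curv} using that $V$ is normal, $V\perp\varphi T\Sigma^2$ and $V\perp\varphi H$. No differences worth noting.
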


\begin{proof} The conclusion follows easily from the Ricci equation
$$
\langle
R^{\perp}(X,Y)H,V\rangle=\langle[A_H,A_V]X,Y\rangle+\langle
R^N(X,Y)H,V\rangle,
$$
since
$$
\begin{array}{lll}
\langle R^N(X,Y)H,V\rangle&=&\frac{\rho}{4}\{\langle X,\varphi H\rangle\langle\varphi Y,V\rangle-\langle Y,\varphi H\rangle\langle\varphi X,V\rangle+2\langle X, \varphi Y\rangle\langle\varphi H,V\rangle\}\\ \\&=&0
\end{array}
$$
and $R^{\perp}(X,Y)H=0$.
\end{proof}

\begin{corollary}\label{lemma_split} Either $H$ is an
umbilical direction or there exists a basis that diagonalizes
simultaneously $A_H$ and $A_V$, for all normal vectors $V$ satisfying $V\perp \varphi T\Sigma^2$
and $V\perp \varphi H$.
\end{corollary}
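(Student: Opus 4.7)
The plan is to derive the Corollary directly from Lemma \ref{lemma_com} by appealing to the standard linear-algebra fact that a self-adjoint operator with simple spectrum on a $2$-dimensional vector space forces any commuting self-adjoint operator to share its eigenbasis. Since $\dim \Sigma^2 = 2$, the argument becomes particularly clean: at each point $p\in\Sigma^2$, the shape operator $A_H$ is a symmetric endomorphism of the $2$-dimensional space $T_p\Sigma^2$ and therefore has either two distinct real eigenvalues or one double eigenvalue.

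First I would dispose of the degenerate case. If at some $p$ the operator $A_H$ has a double eigenvalue $\mu$, then by symmetry $A_H = \mu\,\mathrm{I}$ on $T_p\Sigma^2$, which is exactly the statement that $H$ is an umbilical direction at $p$. So we may assume $A_H$ has two distinct eigenvalues $\mu_1 \neq \mu_2$ and pick a corresponding orthonormal eigenbasis $\{X_1,X_2\}$ of $T_p\Sigma^2$; the two eigenspaces $\Span\{X_1\}$ and $\Span\{X_2\}$ are then $1$-dimensional.

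Next, for any normal vector $V$ satisfying $V\perp\varphi T\Sigma^2$ and $V\perp\varphi H$, Lemma \ref{lemma_com} gives $[A_H,A_V]=0$. A standard computation then shows that $A_V$ preserves each eigenspace of $A_H$: indeed, for $i=1,2$,
$$
A_H(A_V X_i) = A_V(A_H X_i) = \mu_i\,A_V X_i,
$$
so $A_V X_i$ lies in the $\mu_i$-eigenspace of $A_H$, which is $1$-dimensional and thus spanned by $X_i$. Hence $A_V X_i = \lambda_i(V)\,X_i$ for some scalar $\lambda_i(V)$, i.e.\ $\{X_1,X_2\}$ diagonalizes $A_V$ as well. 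Crucially, the basis $\{X_1,X_2\}$ depends only on $A_H$ and not on $V$, so the same basis simultaneously diagonalizes $A_H$ and $A_V$ for every admissible $V$.

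There is no real obstacle here beyond being careful that the dichotomy is genuinely pointwise: the case ``$H$ is umbilical'' must hold at the given point, and the diagonalizing basis is constructed point by point rather than globally. Since the statement of the Corollary is itself pointwise, this is harmless and the argument is complete.
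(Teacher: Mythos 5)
Your argument is correct and is exactly the standard reasoning the paper leaves implicit (the Corollary is stated without proof as an immediate consequence of Lemma \ref{lemma_com}): on a $2$-dimensional tangent space a non-umbilical $A_H$ has simple spectrum, so any commuting symmetric $A_V$ shares its eigenbasis, which depends only on $A_H$. Nothing further is needed.
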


Now, assume that $H$ is an umbilical direction everywhere, which means that the surface is pseudo-umbilical, i.e. $A_H=|H|^2\id$. For such a surface, since $H$ is also parallel, we have
$$
\begin{array}{ll}
R^N(X,Y)H&=\nabla_X\nabla_YH-\nabla_Y\nabla_XH-\nabla_{[X,Y]}H\\ \\&=-|H|^2(\nabla_XY-\nabla_YX-[X,Y])=0,
\end{array}
$$
for any tangent vector fields $X$ and $Y$.

In the following we shall prove that, in this case, $\xi\perp T\Sigma^2$ and $\varphi(T\Sigma^2)\subset N\Sigma^2$, where $N\Sigma^2$ is the normal bundle of the surface.

First, we have

\begin{lemma}\label{l1umb} The following four relations are equivalent:
\begin{enumerate}
\item[({\it i})] $\xi\perp T\Sigma^2$;

\item[({\it ii})] $H\perp\xi$;

\item[({\it iii})] $\varphi(T\Sigma^2)\subset N\Sigma^2$;

\item[({\it iv})] $\varphi H\perp T\Sigma^2$.

\end{enumerate}
\end{lemma}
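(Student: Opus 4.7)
The plan rests on two consequences of the pseudo-umbilical pmc hypothesis: since $H$ is parallel and $A_H=|H|^2\id$, we have $\nabla^N_X H=-|H|^2 X$ for every $X$ tangent to $\Sigma^2$, and the computation preceding the lemma already gives $R^N(X,Y)H=0$ for all tangent $X,Y$. Differentiating $\eta(H)=\langle H,\xi\rangle$ along a tangent $X$, using $\nabla^N\xi=0$, yields the pointwise identity
\[
X(\eta(H))=-|H|^2\eta(X).
\]

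The equivalence (i)$\Leftrightarrow$(ii) will follow directly from this identity. If $\eta(H)\equiv 0$, then $\eta(X)=0$ for every tangent $X$ (as $|H|$ is a non-zero constant), giving (i). Conversely, under (i) the function $f:=\eta(H)$ is locally constant; writing $\xi=(f/|H|^2)H+\xi_1$ with $\xi_1$ normal and orthogonal to $H$, the relation $\nabla^N\xi=0$ together with $\nabla^N_X H=-|H|^2 X$ forces, on the tangent bundle, $A_{\xi_1}=-f\,\id$. Taking the trace gives $\langle H,\xi_1\rangle=-f$, and since $\xi_1\perp H$ by construction we deduce $f=0$, which is (ii). I expect this direction to be the main delicate point: the constancy of $\eta(H)$ under (i) does not by itself force it to vanish, and the umbilical trace argument on $\xi_1$ is what does.

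For the remaining equivalences I will expand $R^N(X,Y)H=0$ in an orthonormal tangent frame $\{X,Y\}$ using \eqref{eq:curv}. Projecting the result on $X$ and on $Y$ yields
\[
3\alpha\beta_X=\eta(Y)\eta(H),\qquad 3\alpha\beta_Y=-\eta(X)\eta(H),
\]
where $\alpha=\langle X,\varphi Y\rangle$, $\beta_X=\langle X,\varphi H\rangle$ and $\beta_Y=\langle Y,\varphi H\rangle$. By the skew-symmetry of $\varphi$, (iii) is equivalent to $\alpha=0$ and (iv) to $\beta_X=\beta_Y=0$. Under (iii) both right-hand sides vanish; on the open set $\{\eta(H)\neq 0\}$ one then has $\xi^\top=0$, which by the previous paragraph forces $\eta(H)=0$ there, a contradiction, so (ii) holds globally. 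The opposite direction (i)$\Rightarrow$(iii) starts from $\alpha\beta_X=\alpha\beta_Y=0$, so at each point either (iii) or (iv) holds, reducing the problem to (iv)$\Rightarrow$(iii). For this, the parallelism of $\varphi$ gives $\nabla^N_X(\varphi H)=\varphi\nabla^N_X H=-|H|^2\varphi X$; assuming (iv) the vector $\varphi H$ is normal and the Weingarten formula yields $A_{\varphi H}X=|H|^2(\varphi X)^\top=-|H|^2\alpha Y$ together with $A_{\varphi H}Y=|H|^2\alpha X$, so the symmetry of $A_{\varphi H}$ forces $\alpha=0$. Finally (iii)$\Rightarrow$(iv) is obtained by pairing $R^N(X,Y)H=0$ with an arbitrary normal $V$: with $\alpha=0$ this gives $\langle\beta_X\varphi Y-\beta_Y\varphi X,V\rangle=0$ for all normal $V$, so $\beta_X\varphi Y-\beta_Y\varphi X$ is tangent; on the other hand, (iii) makes it normal, so it vanishes, and since (i) ensures $\varphi^2X=-X$, $\varphi^2Y=-Y$, the vectors $\varphi X,\varphi Y$ are linearly independent, whence $\beta_X=\beta_Y=0$.
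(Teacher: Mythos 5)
Your proof is correct, and it reaches the key identities by a genuinely different route than the paper. The paper links the pair $\{(i),(ii)\}$ to the pair $\{(iii),(iv)\}$ through the holomorphicity of $Q(Z,Z)$, which under pseudo-umbilicity collapses to $\langle Z,\widehat Z\rangle\eta(Z)\eta(H)+3\langle\varphi Z,H\rangle\langle\varphi Z,\widehat Z\rangle=0$; the real and imaginary parts of this are exactly your relations $3\alpha\beta_X=\eta(Y)\eta(H)$ and $3\alpha\beta_Y=-\eta(X)\eta(H)$, which you instead read off directly by projecting $R^N(X,Y)H=0$ onto the tangent frame. Your derivation is thus more elementary (it never invokes $Q$), at the cost of not displaying the lemma as a consequence of the holomorphic differential. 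The logical organization also differs: the paper observes that the identity gives the pointwise equivalence of $(i)\vee(ii)$ with $(iii)\vee(iv)$ and then proves $(i)\Leftrightarrow(ii)$ and $(iii)\Leftrightarrow(iv)$ separately, whereas you prove $(i)\Leftrightarrow(ii)$ and then close the loop with two open-set contradiction arguments. Your $(i)\Rightarrow(ii)$ via $A_{\xi_1}=-f\,\id$ and its trace is a repackaging of the paper's one-line computation $\eta(\nabla^N_{\widehat Z}Z)=\langle Z,\widehat Z\rangle\eta(H)$ (both amount to $\eta(H)=\tfrac12\trace\eta(\sigma(\cdot,\cdot))=0$ once $\eta$ kills tangent vectors), and your $(iv)\Rightarrow(iii)$ via the symmetry of $A_{\varphi H}$ is essentially the paper's differentiation of $\langle\varphi H,Z\rangle=0$. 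Two small points: ``both right-hand sides vanish'' under $(iii)$ should read left-hand sides; and the differential steps ($(iv)\Rightarrow(iii)$, and the local use of $(i)\Rightarrow(ii)$) need their hypothesis on an open set rather than at a single point --- this is harmless since the sets $\{\alpha\neq0\}$ and $\{\eta(H)\neq0\}$ on which you apply them are open, but it is worth saying explicitly.
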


\begin{proof} As $H$ is umbilical, it results that $\langle\sigma(Z,Z),H\rangle=0$ and, consequently, the $(2,0)$-part of $Q$ is, in this case,
$$
Q(Z,Z)=-\rho|H|^2(\eta(Z))^2+3\rho\langle \varphi Z, H\rangle^2,
$$
where $Z$ and its conjugate $\widehat Z$ are the complex vectors on $\Sigma^2$, defined in Section \ref{sqf}.

Since $Q(Z,Z)$ is holomorphic and $H$ is umbilical and parallel, it follows that
$$
\langle Z,\widehat Z\rangle\eta(Z)\eta(H)+3\langle\varphi Z, H\rangle\langle\varphi Z,\widehat Z\rangle=0.
$$
Now, it is easy to see that $\eta(Z)\eta(H)=0$ is equivalent to $\langle\varphi Z, H\rangle\langle\varphi Z,\widehat Z\rangle=0$, and then we only have to prove the equivalence between ({\it i}) and ({\it ii}) and between ({\it iii}) and ({\it iv}), respectively.

First, if $\eta(Z)=0$ then $\eta(\nabla^N_{\widehat Z}Z)=\langle Z,\widehat Z\rangle\eta(H)=0$,
as $N^{2n+1}(\rho)$ is a cosymplectic space form and $\nabla^N_{\widehat Z}Z=\langle Z,\widehat Z\rangle H$. Conversely, if $\eta(H)=0$, we have
$$
\eta(\nabla^N_ZH)=-\eta(A_HZ)=-|H|^2\eta(Z)=0.
$$

Next, since $R^N(X,Y)H=0$, for any tangent vector fields $X$ and $Y$, we get
$$
\begin{array}{ll}
0=R^N(\widehat Z,Z)H=&\frac{\rho}{4}\{\langle\varphi H,\widehat Z\rangle\varphi Z-\langle\varphi H,Z\rangle\varphi\widehat Z+\langle\varphi Z,\widehat Z\rangle\varphi H\\ \\&+\eta(\widehat Z)\eta(H)Z-\eta(Z)\eta(H)\widehat Z\}.
\end{array}
$$
Assume that relation ({\it iii}) holds, i.e. that $\langle\varphi Z,\widehat Z\rangle=0$. As we have seen, this also implies $\eta(Z)=\eta(\widehat Z)=0$ and $\eta(H)=0$. Then, by using the definition of the cosymplectic structure on $N^{2n+1}(\rho)$, we have
$$
\langle R^N(\widehat Z,Z)H,\varphi Z\rangle=-\frac{\rho}{4}\langle Z,\widehat Z\rangle\langle\varphi H,Z\rangle=0.
$$
Conversely, if ({\it iv}) holds, i.e. if $\langle\varphi H,Z\rangle=0$, we have
$$
\begin{array}{lcl}
0&=&\langle\nabla_{\widehat Z}\varphi H,Z\rangle=\langle\varphi\nabla_{\widehat Z}H,Z\rangle+\langle\varphi H,\nabla_{\widehat Z}Z\rangle=-\langle\varphi A_H\widehat Z,Z\rangle+\langle Z,\widehat Z\rangle\langle\varphi H,H\rangle\\ \\&=&|H|^2\langle\varphi Z,\widehat Z\rangle,
\end{array}
$$
and come to the conclusion.
\end{proof}

Now, let us assume that relations ({\it i})-({\it iv}) do not hold on our surface. We choose an orthonormal basis $\{e_1,e_2\}$ on $\Sigma^2$ such that $e_1\perp\xi$, i.e. $\eta(e_1)=0$. Then, from $\langle R^N(e_1,e_2)H,e_2\rangle=0$, we obtain
$$
\langle\varphi e_2,e_1\rangle\langle\varphi H,e_2\rangle=0,
$$
which means that $\langle\varphi H,e_2\rangle=0$, and then $R^N(e_1,e_2)H=0$ can be written as
\begin{equation}\label{eq:R}
2\langle\varphi e_2,e_1\rangle\varphi H+\langle\varphi H,e_1\rangle\varphi e_2-\eta(e_2)\eta(H)e_1=0.
\end{equation}

We take the product of this equation with $\varphi H$, $e_1$ and $\varphi e_2$, respectively, and obtain
\begin{equation}\label{eq:R1}
\langle\varphi e_2,e_1\rangle\langle\varphi H,\varphi H\rangle=\eta(e_2)\eta(H)\langle\varphi H,e_1\rangle
\end{equation}
\begin{equation}\label{eq:R2}
3\langle\varphi e_2,e_1\rangle\langle\varphi H,e_1\rangle=\eta(e_2)\eta(H)
\end{equation}
and
\begin{equation}\label{eq:R3}
3\langle\varphi e_2,e_1\rangle\eta(e_2)\eta(H)=\langle\varphi H,e_1\rangle\langle\varphi e_2,\varphi e_2\rangle.
\end{equation}

Since $\langle\varphi e_2,e_1\rangle\neq 0$ and $\langle\varphi H,e_1\rangle\neq 0$, from the first two equations, we get
\begin{equation}\label{eq:R1.1}
\langle\varphi H,\varphi H\rangle=|H|^2-(\eta(H))^2=3\langle\varphi H,e_1\rangle^2
\end{equation}
and, from the last two,
\begin{equation}\label{eq:R1.2}
\langle\varphi e_2,\varphi e_2\rangle=1-(\eta(e_2))^2=9\langle\varphi e_2,e_1\rangle^2.
\end{equation}

\begin{lemma}\label{lemma_basis} If the relations ({\it i})-({\it iv}) in Lemma \ref{l1umb} do not hold on $\Sigma^2$ then we have
\begin{enumerate}
\item $2|H|^2\langle\varphi e_2,e_1\rangle=\langle\varphi H,\sigma(e_1,e_2)\rangle$;

\item $\langle\varphi H,\sigma(e_1,e_1)\rangle=\langle\varphi H,\sigma(e_2,e_2)\rangle=0$;

\item $\nabla_{e_2}e_2=\nabla_{e_2}e_1=0$;

\item $\eta(\sigma(e_1,e_2))=0$ and $\langle\varphi e_1,\sigma(e_1,e_2)\rangle=0$.
\end{enumerate}

\end{lemma}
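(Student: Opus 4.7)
The plan is to exploit the identity $\nabla^N_X H=-|H|^2 X$ for $X$ tangent to $\Sigma^2$, which follows from pseudo-umbilicity $A_H=|H|^2\id$ together with $\nabla^\perp H=0$; since $\nabla^N\varphi=0$, this upgrades to $\nabla^N_X(\varphi H)=-|H|^2\varphi X$. I would decompose $\varphi H=a\,e_1+W$ with $a=\langle\varphi H,e_1\rangle$ and $W=(\varphi H)^N$, and set $b=\langle\varphi e_2,e_1\rangle$; by the standing assumption both $a$ and $b$ are nonzero, while $\langle W,H\rangle=0$. Differentiating \eqref{eq:R1.1} along $e_i$ and using $X(\eta(H))=-|H|^2\eta(X)$ together with \eqref{eq:R2} will also give $e_1(a)=0$ and $e_2(a)=|H|^2 b$.

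Writing $\omega_j$ for the connection coefficients with $\nabla_{e_j}e_1=\omega_j e_2$ and $\nabla_{e_j}e_2=-\omega_j e_1$, I compute $\nabla^N_{e_i}(\varphi H)$ in two ways and equate tangent parts to obtain
$$
A_W e_1=(a\omega_1-|H|^2 b)e_2,\qquad A_W e_2=2|H|^2 b\,e_1+a\omega_2 e_2.
$$
The symmetry $\langle A_W e_1,e_2\rangle=\langle A_W e_2,e_1\rangle$ then forces $a\omega_1=3|H|^2 b$, and in particular $\langle\sigma(e_1,e_1),W\rangle=\langle A_W e_1,e_1\rangle=0$. Combining with $\sigma(e_1,e_1)+\sigma(e_2,e_2)=2H$ and $\langle W,H\rangle=0$ yields $\langle\sigma(e_2,e_2),W\rangle=0$, so $a\omega_2=\langle A_W e_2,e_2\rangle=0$ and hence $\omega_2=0$, proving (3). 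Part (2) is then immediate since $\langle\sigma(e_i,e_i),\varphi H\rangle=\langle\sigma(e_i,e_i),W\rangle=0$, and part (1) reads $\langle\sigma(e_1,e_2),\varphi H\rangle=\langle A_W e_1,e_2\rangle=2|H|^2 b$.

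For (4), the first identity is routine: since $\eta$ is parallel, $d\eta=0$, and using $\eta(e_1)=0$, $[e_1,e_2]=-\omega_1 e_1$ (thanks to $\omega_2=0$), and parallelism of $\xi$, the relation $d\eta(e_1,e_2)=0$ gives $\eta(\sigma(e_1,e_2))=e_1(\eta(e_2))=0$. The second identity $\langle\varphi e_1,\sigma(e_1,e_2)\rangle=0$ I expect to be the main obstacle, because direct differentiation of $b$ only produces the tautology $\langle\varphi e_1,\sigma(e_1,e_2)\rangle=\langle\varphi e_2,\sigma(e_1,e_1)\rangle$. The trick will be to solve \eqref{eq:R} explicitly for $\varphi H$: together with \eqref{eq:R2} it gives $\varphi H=-\tfrac{a}{2b}\varphi e_2+\tfrac{3a}{2}e_1$, so that $W=-\tfrac{a}{2b}(\varphi e_2-b e_1)=-\tfrac{a}{2b}(\varphi e_2)^N$. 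Since $W$ and $(\varphi e_2)^N$ are proportional in the normal bundle, $\langle\sigma(e_1,e_1),W\rangle=0$ upgrades to $\langle\sigma(e_1,e_1),\varphi e_2\rangle=0$. Differentiating \eqref{eq:R1.2} along $e_1$ (using $e_1(\eta(e_2))=0$) forces $e_1(b)=0$, while direct differentiation via $\nabla^N\varphi=0$ gives $e_1(b)=-\langle\varphi e_1,\sigma(e_1,e_2)\rangle+\langle\varphi e_2,\sigma(e_1,e_1)\rangle$; combining these closes the argument.
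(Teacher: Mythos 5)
Your argument is correct, and it is organized rather differently from the paper's. The paper proves (1) and (2) by differentiating the scalar identity \eqref{eq:R1.1} along $e_2$ and $e_1$ respectively, gets (3) by differentiating $\langle\varphi H,e_2\rangle=0$ along $e_2$, and obtains the second identity in (4) in one line by pairing \eqref{eq:R} with $\varphi\sigma(e_1,e_2)$ (which kills every term except $\eta(e_2)\eta(H)\langle\varphi e_1,\sigma(e_1,e_2)\rangle$, once $\eta(\sigma(e_1,e_2))=0$ is known). You instead package the same differentiations into the single vector identity $\nabla^N_X(\varphi H)=-|H|^2\varphi X$, read off $A_We_1$ and $A_We_2$ from its tangent part, and let the symmetry of $A_W$ do the bookkeeping; this yields (1), (2) and (3) simultaneously and also produces the extra relation $a\omega_1=3|H|^2b$, which the paper only obtains later (it is essentially \eqref{eq:e2}). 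For the last identity your route is genuinely different and slightly longer: you solve \eqref{eq:R} explicitly to get $W=-\tfrac{a}{2b}(\varphi e_2)^{\perp}$, upgrade $\langle\sigma(e_1,e_1),W\rangle=0$ to $\langle\sigma(e_1,e_1),\varphi e_2\rangle=0$, and then exploit $e_1(b)=0$ from \eqref{eq:R1.2}; the paper's pairing trick is shorter but your proportionality $W\parallel(\varphi e_2)^{\perp}$ is a clean structural fact that the paper never makes explicit. I checked the individual steps ($e_1(a)=0$, $e_2(a)=|H|^2b$ via \eqref{eq:R1.1} and \eqref{eq:R2}; the tangent parts of $\nabla^N_{e_i}(\varphi H)$; $\langle W,H\rangle=0$; the trace identity $\sigma(e_1,e_1)+\sigma(e_2,e_2)=2H$) and they all hold, so the proposal stands as a valid alternative proof.
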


\begin{proof} From equation \eqref{eq:R1.1} it follows
$$
2\langle\varphi H, \varphi\nabla^N_{e_2}H\rangle=6\langle\varphi H,e_1\rangle\rangle(\langle\varphi\nabla^N_{e_2}H,e_1\rangle+\langle\varphi H,\nabla^N_{e_2}e_1\rangle).
$$
But we also know that $\nabla^N_{e_2}H=-|H|^2e_2$ and, since $\langle\varphi H, e_2\rangle=0$, that $\langle\varphi H,\nabla_{e_2}e_1\rangle=0$. Replacing in the above equation and using equation \eqref{eq:R2} one obtains
$$
2|H|^2\langle\varphi e_2,e_1\rangle=\langle\varphi H,\sigma(e_1,e_2)\rangle.
$$

In the same manner, from equation \eqref{eq:R1.1}, we obtain $\langle\varphi H,\sigma(e_1,e_1)\rangle=0$ and then $\langle\varphi H,\sigma(e_2,e_2)\rangle=0$.

As $\langle\varphi H,e_2\rangle=0$ we get $\langle\varphi H,\nabla^N_{e_2}e_2\rangle=0$, which implies
$$
\langle\varphi H,\nabla_{e_2}e_2\rangle=\langle\varphi H,e_1\rangle\langle\nabla_{e_2}e_2,e_1\rangle=0,
$$
meaning that $\nabla_{e_2}e_2=0$. Since $e_1\perp e_2$, we also have $\nabla_{e_2}e_1=0$.

Finally, $\eta(e_1)=0$ and $\nabla^N\xi=0$ imply $\eta(\nabla^N_{e_2}e_1)=0$. Since $\nabla_{e_2}e_1=0$ it follows that $\eta(\sigma(e_1,e_2))=0$. Then the last identity in our Lemma follows easily by taking the product of \eqref{eq:R} with $\varphi\sigma(e_1,e_2)$.
\end{proof}

From the expression of the curvature tensor $R^N$ it can be easily checked that $R^N$ is parallel, i.e. $\nabla^NR^N=0$.  Therefore, we have $(\nabla^N_{e_1}R^N)(e_1,e_2,H)=0$ and then, as $R^N(X,Y)H=0$, for any tangent vectors $X$ and $Y$, one obtains
$$
|H|^2R^N(e_1,e_2,e_1)-R^N(\sigma(e_1,e_1),e_2,H)-R^N(e_1,\sigma(e_1,e_2),H)=0.
$$
By using \eqref{eq:curv}, \eqref{eq:R} and Lemma \ref{lemma_basis}, the above equation become, after a straightforward computation,
$$
\eta(\sigma(e_1,e_1))\eta(H)e_2-\eta(e_2)\eta(H)\sigma(e_1,e_1)+\langle\varphi H,e_1\rangle\varphi\sigma(e_1,e_2)-5|H|^2\langle\varphi e_2,e_1\rangle\varphi e_1=0,
$$
and, by taking the product with $e_2$, we obtain that
\begin{equation}\label{eq:R_par}
\eta(\sigma(e_1,e_1))\eta(H)+9|H|^2\langle\varphi e_2,e_1\rangle^2=0.
\end{equation}

Next, from equations \eqref{eq:R2}, \eqref{eq:R1.1} and \eqref{eq:R1.2}, it follows that
$$
3|H|^2\langle\varphi e_2,e_1\rangle^2=(1-6\langle\varphi e_2,e_1\rangle^2)(\eta(H))^2.
$$
Hence, replacing in \eqref{eq:R_par}, we get $\eta(\sigma(e_1,e_1))=3\eta(H)(6\langle\varphi e_2,e_1\rangle^2-1)$
and then $
\eta(\sigma(e_2,e_2))=\eta(H)(5-18\langle\varphi e_2,e_1\rangle^2)$,
which means that
\begin{equation}\label{eq:eta}
\eta(\nabla^N_{e_2}e_2)=\eta(H)(5-18\langle\varphi e_2,e_1\rangle^2),
\end{equation}
since $\nabla_{e_2}e_2=0$.

From equation \eqref{eq:R1.2}, we obtain $2\eta(e_2)\eta(\nabla^N_{e_2}e_2)=-18\langle\varphi e_2,e_1\rangle e_2(\langle\varphi e_2,e_1\rangle)$,
and then, from \eqref{eq:eta} and \eqref{eq:R2}, it results
\begin{equation}\label{eq:e2}
3e_2(\langle\varphi e_2,e_1\rangle)=(18\langle\varphi e_2,e_1\rangle^2-5)\langle\varphi H, e_1\rangle.
\end{equation}

Finally, we differentiate the equation \eqref{eq:R2}, and using the equations \eqref{eq:eta} and \eqref{eq:e2}, the fact that $H$ is umbilical and parallel and Lemma \ref{lemma_basis}, we obtain
$$
|H|^2+(5-18\langle\varphi e_2,e_1\rangle^2)(\eta(H))^2=0.
$$
But, from equation \eqref{eq:R1.2}, we know that $9\langle\varphi e_2,e_1\rangle^2<1$. Therefore, the last equation is a contradiction.

Thus, it results that $\xi\perp T\Sigma^2$,
$\varphi(T\Sigma^2)\subset N\Sigma^2$, $H\perp\xi$ and $\varphi H\perp T\Sigma^2$. Now, it is easy to see that, if $\{e_1,e_2\}$ is an orthonormal frame on $\Sigma^2$, then, at any point on the surface, the system $\{e_1,e_2,\varphi e_1,\varphi e_2, H,\varphi H,\xi\}$ is linearly independent, which means that $n\geq 3$.

Thus we can state the following

\begin{proposition}\label{p_umb} Let $\Sigma^2$ be an immersed non-minimal pmc surface in a non-flat cosymplectic space form $N^{2n+1}(\rho)$, $n\geq 2$. If the mean curvature vector $H$ is an umbilical direction everywhere, then $\xi\perp T\Sigma^2$,
$\varphi(T\Sigma^2)\subset N\Sigma^2$ and $n\geq 3$. Moreover, $H\perp\xi$ and $\varphi H\perp T\Sigma^2$.
\end{proposition}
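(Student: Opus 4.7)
The plan is to combine the pseudo-umbilical hypothesis $A_H=|H|^2\id$ with $\nabla^{\perp}H=0$ to deduce that $R^N(X,Y)H=0$ for all tangent $X,Y$, and then to exploit the holomorphicity of $Q$ together with a direct analysis of this curvature identity. Since umbilicity gives $\langle\sigma(Z,Z),H\rangle=0$, the $(2,0)$-part of $Q$ collapses to $Q(Z,Z)=-\rho|H|^2(\eta(Z))^2+3\rho\langle\varphi Z,H\rangle^2$; applying $\widehat Z$ and using $\nabla^N_{\widehat Z}Z=\langle Z,\widehat Z\rangle H$ produces one scalar holomorphicity identity linking $\eta(Z)\eta(H)$ with $\langle\varphi Z,H\rangle\langle\varphi Z,\widehat Z\rangle$.

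The first real step is to establish the four-fold equivalence of the conditions (i) $\xi\perp T\Sigma^2$, (ii) $H\perp\xi$, (iii) $\varphi(T\Sigma^2)\subset N\Sigma^2$, and (iv) $\varphi H\perp T\Sigma^2$. The implications (i)$\Leftrightarrow$(ii) drop out by differentiating $\eta(Z)$ and $\eta(H)$ along $\widehat Z$ and $Z$ respectively, using $\nabla^N\xi=0$, $\nabla^N_{\widehat Z}Z=\langle Z,\widehat Z\rangle H$, and $\nabla^N_ZH=-|H|^2Z$. For (iii)$\Leftrightarrow$(iv) one evaluates $R^N(\widehat Z,Z)H=0$ against $\varphi Z$ via \eqref{eq:curv} and the cosymplectic compatibility, and differentiates $\langle\varphi H,Z\rangle$ along $\widehat Z$ for the converse, exactly as in Lemma \ref{l1umb}.

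The heart of the proof is showing that the case where all four conditions fail is impossible. I would fix an orthonormal frame $\{e_1,e_2\}$ on $\Sigma^2$ with $\eta(e_1)=0$; the identity $R^N(e_1,e_2)H=0$, expanded through \eqref{eq:curv}, produces \eqref{eq:R}, whose projections onto $\varphi H$, $e_1$ and $\varphi e_2$ yield \eqref{eq:R1}--\eqref{eq:R3} and the scalar equations \eqref{eq:R1.1}, \eqref{eq:R1.2}. Coupling these with the auxiliary identities of Lemma \ref{lemma_basis} (in particular $\nabla_{e_2}e_2=\nabla_{e_2}e_1=0$ and $\langle\varphi H,\sigma(e_i,e_i)\rangle=0$) sets the scene for the final blow. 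Since the curvature tensor $R^N$ is parallel on any cosymplectic space form, $(\nabla^N_{e_1}R^N)(e_1,e_2,H)=0$ gives, after using $R^N(\cdot,\cdot)H\equiv 0$ on tangent vectors, the new relation \eqref{eq:R_par}; then differentiating \eqref{eq:R2} along $e_2$ and feeding in \eqref{eq:eta} and \eqref{eq:e2} produces the identity $|H|^2+(5-18\langle\varphi e_2,e_1\rangle^2)(\eta(H))^2=0$, which contradicts the bound $9\langle\varphi e_2,e_1\rangle^2<1$ forced by \eqref{eq:R1.2}.

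Once conditions (i)--(iv) are secured, the dimensional claim $n\geq 3$ is immediate: at any point, the seven vectors $\{e_1,e_2,\varphi e_1,\varphi e_2,H,\varphi H,\xi\}$ are pairwise orthogonal, so $2n+1\geq 7$. The main obstacle is the contradiction step of the third paragraph: no single computation is deep, but carefully synchronizing the structural identities, the frame derivatives and the parallel-curvature identity is delicate, and the essential extra leverage comes precisely from $\nabla^N R^N=0$, which supplies one more scalar equation than $R^N(X,Y)H=0$ alone provides.
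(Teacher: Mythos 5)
Your proposal is correct and follows essentially the same route as the paper: deriving $R^N(X,Y)H=0$ from $A_H=|H|^2\id$ and $\nabla^\perp H=0$, establishing the four-fold equivalence of Lemma \ref{l1umb} via the holomorphicity of $Q$, and ruling out the remaining case by combining the projections of $R^N(e_1,e_2)H=0$ with the extra scalar relation coming from $\nabla^N R^N=0$ to reach the contradiction $|H|^2+(5-18\langle\varphi e_2,e_1\rangle^2)(\eta(H))^2=0$. The concluding dimension count via the seven mutually orthogonal vectors is likewise the paper's argument.
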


Let $N^{2n+1}(\rho)$ be the product between a non-flat complex space form $M^n(\rho)$, with complex dimension $n$, and $\mathbb{R}$. If $\Sigma^2$ is an immersed surface in $N^{2n+1}(\rho)$ as in the previous Proposition, it follows that $\Sigma^2$ is a totally real surface in $M^n(\rho)$. Moreover, since $N^{2n+1}(\rho)$ is a product space, we have $\nabla^N_{\widehat Z}Z=\nabla^M_{\widehat Z}Z$, $\nabla^N_{Z}Z=\nabla^M_{Z}Z$ and $\nabla^N_{X}H=\nabla^M_{X}H$, for any vector field $X$ tangent to $\Sigma^2$, where we have used the fact that $H\perp\xi$. From these identities, we obtain that the surface is pseudo-umbilical and with parallel mean curvature vector in $M^n(\rho)$. Hence, we have

\begin{corollary} Let $\Sigma^2$ be an immersed non-minimal pmc surface in
$M^{n}(\rho)\times\mathbb{R}$, $n\geq 2$, $\rho\neq 0$. If its mean curvature vector is an umbilical direction everywhere, then $\Sigma^2$ is a pseudo-umbilical non-minimal totally real pmc surface in $M^n(\rho)$, and $n\geq 3$.
\end{corollary}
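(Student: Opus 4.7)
The plan is to read off this Corollary directly from Proposition \ref{p_umb} combined with the elementary restriction properties of a Riemannian product. Since $N^{2n+1}(\rho)=M^n(\rho)\times\mathbb{R}$ is a (non-flat, as $\rho\neq 0$) cosymplectic space form and $H$ is an umbilical direction everywhere, Proposition \ref{p_umb} applies and yields immediately $\xi\perp T\Sigma^2$, $\varphi(T\Sigma^2)\subset N\Sigma^2$, $H\perp\xi$, $\varphi H\perp T\Sigma^2$, and $n\geq 3$; the last inequality already settles the dimension claim. All the real work has been done upstream, so the present argument is essentially bookkeeping.

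Next I would reduce $\Sigma^2$ to a surface in $M^n(\rho)$. Because the integral curves of $\xi=\partial/\partial t$ are the $\mathbb{R}$-fibres and $\xi\perp T\Sigma^2$, the function $t$ is locally constant along $\Sigma^2$; assuming $\Sigma^2$ connected, it lies inside a single slice $M^n(\rho)\times\{t_0\}$, which is a totally geodesic isometric copy of $M^n(\rho)$ inside $N^{2n+1}(\rho)$. Under this identification, $\varphi=J\circ d\pi$ restricts on $T\Sigma^2$ to $J$, and the condition $\varphi(T\Sigma^2)\subset N\Sigma^2$ becomes $J(T\Sigma^2)\perp T\Sigma^2$, i.e., $\Sigma^2$ is totally real in $M^n(\rho)$.

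Finally, I would transfer the parallelism and pseudo-umbilicity from $N$ to $M$. Since the slice is totally geodesic in the Riemannian product, $\nabla^N_XY=\nabla^M_XY$ for all $X,Y$ tangent to $\Sigma^2$, and because $H\perp\xi$ places $H$ in $TM^n(\rho)$, also $\nabla^N_XH=\nabla^M_XH$. Consequently the second fundamental form, the shape operator $A_H$, and the normal covariant derivative of $H$ computed in $N^{2n+1}(\rho)$ coincide with those computed in $M^n(\rho)$; the only mildly delicate step, and the one that one should not skip silently, is keeping track of the splitting of the normal bundle in the product as its $M$-counterpart plus the line $\mathbb{R}\xi$. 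This causes no trouble because $H\perp\xi$ keeps the mean curvature vector (and every covariant derivative of it) in the $M$-summand. Hence $A_H=|H|^2\id$ and $\nabla^{\perp}H=0$ persist in $M^n(\rho)$, and $|H|\neq 0$ by hypothesis, giving a non-minimal, pseudo-umbilical, totally real pmc surface in $M^n(\rho)$, which is precisely the claim.
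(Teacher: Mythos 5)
Your proposal is correct and follows essentially the same route as the paper: apply Proposition \ref{p_umb}, observe that $\xi\perp T\Sigma^2$ forces the surface into a slice $M^n(\rho)\times\{t_0\}$ where $\varphi(T\Sigma^2)\subset N\Sigma^2$ becomes the totally real condition, and then use the product structure together with $H\perp\xi$ to transfer $\nabla^{\perp}H=0$ and $A_H=|H|^2\id$ to $M^n(\rho)$. The paper's own argument is the same, only stated more tersely; your explicit remarks about $t$ being locally constant and about the splitting of the normal bundle fill in details the paper leaves implicit.
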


\begin{remark} If the mean curvature vector of the surface $\Sigma^2$ is umbilical everywhere then the $(2,0)$-part of the quadratic form $Q$ defined on $\Sigma^2$ vanishes.
\end{remark}

The next step is to study the case when the mean curvature vector of the surface is nowhere umbilical. We shall prove that such a surface lies in a totally geodesic submanifold of $N^{2n+1}(\rho)$, with dimension less than or equal to $11$.

\begin{proposition}\label{lemma_parallel} Assume that $H$ is nowhere an umbilical direction.
Then there exists a parallel subbundle of the normal bundle that
contains the image of the second fundamental form $\sigma$ and has
dimension less than or equal to $9$.
\end{proposition}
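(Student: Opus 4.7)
The plan is to construct an explicit parallel subbundle $L\subset N\Sigma^2$ of fiberwise dimension at most $9$ containing the image of $\sigma$, following the pattern of Alencar--do Carmo--Tribuzy \cite{AdCT} and Fetcu \cite{F} and enlarging it by one dimension to accommodate the characteristic vector $\xi$, which is parallel by the cosymplectic hypothesis.

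Let $N^1_p=\mathrm{span}\{\sigma(X,Y):X,Y\in T_p\Sigma^2\}$ be the first normal space and, for $W\in T_pN^{2n+1}(\rho)$, write $W^{\perp}$ for its orthogonal projection onto $N_p\Sigma^2$. I propose to set
$$
L_p:=N^1_p+(\varphi T_p\Sigma^2)^{\perp}+(\varphi N^1_p)^{\perp}+\mathbb{R}\,\xi^{\perp}.
$$
The four summands have fiber dimension at most $3,2,3,1$ respectively, so $\dim L_p\leq 9$, and $\sigma(T\Sigma^2,T\Sigma^2)\subset L$ by construction. Since $H$ is nowhere umbilical, Corollary \ref{lemma_split} supplies a local orthonormal frame $\{e_1,e_2\}$ on $\Sigma^2$ simultaneously diagonalizing $A_H$ (with distinct eigenvalues $\mu_1\neq\mu_2$) and every $A_U$ with $U\perp\varphi T\Sigma^2$ and $U\perp\varphi H$; this is the basic computational tool.

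To show $L$ is parallel I would check $\nabla^{\perp}_XV\in L$ on each summand separately. The case $V=\xi^{\perp}$ is immediate from $\nabla^N\xi=0$: indeed $\nabla^{\perp}_X\xi^{\perp}=-\sigma(X,\xi^{\top})\in N^1$. For $V=(\varphi Y)^{\perp}$ with $Y\in T\Sigma^2$, the identity $\nabla^N\varphi=0$ gives $\nabla^N_X\varphi Y=\varphi\nabla_XY+\varphi\sigma(X,Y)$, whose normal part lies in $(\varphi T\Sigma^2)^{\perp}+(\varphi N^1)^{\perp}\subset L$, and one subtracts off $\sigma(X,(\varphi Y)^{\top})\in N^1$. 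For $V=(\varphi W)^{\perp}$ with $W\in N^1$, the Weingarten formula gives $\nabla^N_X\varphi W=-\varphi A_WX+\varphi\nabla^{\perp}_XW$; once the case $V\in N^1$ is in hand, $\nabla^{\perp}_XW\in L$, and applying $\varphi$ and projecting back keeps us in $L$ thanks to $\varphi^2U=-U+\eta(U)\xi$ and $\varphi\xi=0$.

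The main obstacle is the case $V\in N^1$, namely $\nabla^{\perp}_X\sigma(e_i,e_j)\in L$, which reduces to showing $(\nabla^{\perp}_X\sigma)(e_i,e_j)\in L$ after absorbing Christoffel terms into $N^1$. The Codazzi equation
$$
(\nabla^{\perp}_X\sigma)(Y,Z)=(\nabla^{\perp}_Y\sigma)(X,Z)+(R^N(X,Y)Z)^{\perp}
$$
together with \eqref{eq:curv} shows that the curvature correction automatically lies in $(\varphi T\Sigma^2)^{\perp}+\mathbb{R}\,\xi^{\perp}\subset L$, and cycling the three slots via Codazzi makes the tensor $(i,j,k)\mapsto(\nabla^{\perp}_{e_i}\sigma)(e_j,e_k)$ totally symmetric modulo $L$. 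Combined with the trace identity $(\nabla^{\perp}_{e_i}\sigma)(e_1,e_1)+(\nabla^{\perp}_{e_i}\sigma)(e_2,e_2)\equiv0\pmod L$ coming from $\nabla^{\perp}H=0$, only two independent scalar components transverse to $L$ remain. These I would pin down to zero by testing against $U\in L^{\perp}$ — which by the definition of $L$ satisfies $A_U=0$, $U\perp\varphi T\Sigma^2\oplus\mathbb{R}\varphi H$, and $U\perp\xi$ — and invoking the commutation $[A_H,A_U]=0$ from Lemma \ref{lemma_com} together with $\mu_1\neq\mu_2$. The delicate bookkeeping in this last step is precisely what produces the sharp bound $\dim L\leq 9$.
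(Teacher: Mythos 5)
Your construction of $L$ is identical to the paper's (same four summands $\im\sigma$, $(\varphi(\im\sigma))^{\perp}$, $(\varphi T\Sigma^2)^{\perp}$, $\xi^{\perp}$, same count $3+3+2+1=9$), and your parallelism argument --- Codazzi symmetry of $\nabla^{\perp}\sigma$ modulo $L$, the trace identity from $\nabla^{\perp}H=0$, and simultaneous diagonalization via Corollary \ref{lemma_split} --- is exactly the paper's, merely phrased directly for $V\in L$ rather than dually for $V\perp L$. The one point to tighten is the final step: the commutation you actually need is $[A_H,A_{\nabla^{\perp}_{e_k}U}]=0$ (not the vacuous $[A_H,A_U]=0$, since $A_U=0$), and this holds because your cases for $(\varphi Y)^{\perp}$ and for $(\varphi H)^{\perp}$ (the latter independent of the $N^1$ case, as $\nabla^{\perp}H=0$) dualize to $\nabla^{\perp}_{e_k}U\perp\varphi T\Sigma^2$ and $\nabla^{\perp}_{e_k}U\perp\varphi H$, which is precisely the paper's opening computation.
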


\begin{proof} We consider a subbundle $L$ of the normal bundle, given by
$$
L=\Span\{\im\sigma\cup(\varphi(\im\sigma))^{\perp}\cup(\varphi(T\Sigma^2))^{\perp}\cup\xi^{\perp}\},
$$
where
$(\varphi(T\Sigma^2))^{\perp}=\{(\varphi X)^{\perp}:X\ \textnormal{tangent to}\
\Sigma^2\}$, $(\varphi(\im\sigma))^{\perp}=\{(\varphi\sigma(X,Y))^{\perp}:X,Y\ \textnormal{tangent to}\
\Sigma^2\}$ and $\xi^{\perp}$ is the normal component of $\xi$ along the surface. We will show that $L$ is parallel.

First, we have to prove that if $V$ is orthogonal to $L$, then
$\nabla^{\perp}_{e_i}V$ is orthogonal to $\varphi(T\Sigma^2)$ and to $\varphi H$,
where $\{e_1,e_2\}$ is a frame satisfying
$$
\langle\sigma(e_1,e_2),V\rangle=\langle\sigma(e_1,e_2),H\rangle=0.
$$
Indeed, one gets
$$
\begin{array}{lll}
\langle(\varphi H)^{\perp},\nabla^{\perp}_{e_i}V\rangle
&=&\langle(\varphi H)^{\perp},\nabla^N_{e_i}V\rangle
=-\langle\nabla^N_{e_i}(\varphi H)^{\perp},V\rangle\\
\\&=&-\langle\nabla^N_{e_i}\varphi H,V\rangle+\langle\nabla^N_{e_i}(\varphi H)^{\top},V\rangle\\
\\&=&\langle\varphi A_He_i,V\rangle+\langle\sigma(e_i,(\varphi H)^{\top}),V\rangle\\ \\&=&0
\end{array}
$$
and
$$
\begin{array}{lll}
\langle(\varphi e_j)^{\perp},\nabla^{\perp}_{e_i}V\rangle&=&-\langle\nabla^N_{e_i}(\varphi e_j)^{\perp},V\rangle\\
\\&=&-\langle\nabla^N_{e_i}\varphi e_j,V\rangle+\langle\nabla^N_{e_i}(\varphi e_j)^{\top},V\rangle\\
\\&=&-\langle\varphi\nabla_{e_i}e_j,V\rangle-\langle\varphi\sigma(e_i,e_j),V\rangle+\langle\sigma(e_i,(\varphi e_j)^{\top}),V\rangle\\
\\&=&0.
\end{array}
$$

Next, we shall prove that if a normal vector $V$ is orthogonal
to $L$, then so is $\nabla^{\perp}V$, i.e.
$$
\langle\sigma(e_i,e_j),\nabla^{\perp}_{e_k}V\rangle=0,\quad
\langle
\varphi\sigma(e_i,e_j),\nabla^{\perp}_{e_k}V\rangle=0,
$$
$$
\langle\varphi e_i,\nabla^{\perp}_{e_k}V\rangle=0,\quad\langle\xi^{\perp},\nabla^{\perp}_{e_k}V\rangle=0.
$$
We only have to prove the first two identities and the last one, since the third has been obtained above.

Let us denote
$A_{ijk}=\langle\nabla^{\perp}_{e_k}\sigma(e_i,e_j),V\rangle$.
As $\sigma$ is symmetric, we have $A_{ijk}=A_{jik}$, and also
$A_{ijk}=-\langle\sigma(e_i,e_j),\nabla^{\perp}_{e_k}V\rangle$,
since $V$ is orthogonal to $L$. We get
$$
\begin{array}{lll}
\langle(\nabla^{\perp}_{e_k}\sigma)(e_i,e_j),V\rangle&=&
\langle\nabla^{\perp}_{e_k}\sigma(e_i,e_j),V\rangle-\langle\sigma(\nabla_{e_k}e_i,e_j),V\rangle
-\langle\sigma(e_i,\nabla_{e_k}e_j),V\rangle\\
\\&=&\langle\nabla^{\perp}_{e_k}\sigma(e_i,e_j),V\rangle,
\end{array}
$$
and, from the Codazzi equation, again using $V\perp L$,
$$
\begin{array}{lll}
\langle(\nabla^{\perp}_{e_k}\sigma)(e_i,e_j),V\rangle&=&\langle(\nabla^{\perp}_{e_i}\sigma)(e_k,e_j)+(R^N(e_k,e_i)e_j)^{\perp},V\rangle\\
\\&=&\langle(\nabla^{\perp}_{e_j}\sigma)(e_k,e_i)+(R^N(e_k,e_j)e_i)^{\perp},V\rangle\\
\\&=&\langle(\nabla^{\perp}_{e_i}\sigma)(e_k,e_j),V\rangle=\langle(\nabla^{\perp}_{e_j}\sigma)(e_k,e_i),V\rangle.
\end{array}
$$
We have just proved that
$A_{ijk}=A_{kji}=A_{ikj}$.

Next, since $\nabla^{\perp}_{e_k}V$ is orthogonal to $\varphi(T\Sigma^2)$
and to $\varphi H$, it follows that the frame field $\{e_1,e_2\}$
diagonalizes $A_{\nabla^{\perp}_{e_k}V}$ as well, and we get
$$
A_{ijk}=-\langle\sigma(e_i,e_j),\nabla^{\perp}_{e_k}V\rangle=-\langle
e_i,A_{\nabla^{\perp}_{e_k}V}e_j\rangle=0
$$
for any $i\neq j$. Hence, we have obtained that if two
indices are different from each other then $A_{ijk}=0$.

Next, we have
$$
\begin{array}{lll}
A_{iii}&=&-\langle\sigma(e_i,e_i),\nabla^{\perp}_{e_i}V\rangle
=-\langle 2H,\nabla^{\perp}_{e_i}V\rangle+\langle\sigma(e_j,e_j),\nabla^{\perp}_{e_i}V\rangle\\
\\&=&\langle 2\nabla^{\perp}_{e_i}H,V\rangle-A_{jji}=0,
\end{array}
$$
and, therefore, the first identity is proved.

In order to obtain the second one, we observe first that if $V$ is orthogonal to $L$ then also $\varphi V$ is normal and orthogonal to $L$. It follows that
$$
\begin{array}{lll}
\langle(\varphi\sigma(e_i,e_j))^{\perp},\nabla^{\perp}_{e_k}
V\rangle &=&-\langle\nabla^N_{e_k}(\varphi\sigma(e_i,e_j))^{\perp},V\rangle\\
\\&=&-\langle\nabla^N_{e_k}\varphi\sigma(e_i,e_j),V\rangle+
\langle\nabla^N_{e_k}(\varphi\sigma(e_i,e_j))^{\top},V\rangle\\ \\
&=&\langle\varphi A_{\sigma(e_i,e_j)}e_k,V\rangle-\langle\varphi\nabla^{\perp}_{e_k}\sigma(e_i,e_j),V\rangle\\
\\&&+\langle\sigma(e_k,(\varphi\sigma(e_i,e_j))^{\top}),V\rangle\\ \\&=&\langle\nabla^{\perp}_{e_k}\sigma(e_i,e_j),\varphi V\rangle=-\langle\sigma(e_i,e_j),\nabla^{\perp}_{e_k}\varphi V\rangle
\\ \\&=&0.
\end{array}
$$

Finally, we get
$$
\begin{array}{ll}
\langle\xi^{\perp},\nabla^{\perp}_{e_k}V\rangle&=\langle\xi^{\perp},\nabla^N_{e_k}V\rangle
=-\langle\nabla^N_{e_k}\xi^{\perp},V\rangle\\ \\&=-\langle\nabla^N_{e_k}\xi,V\rangle+\langle\nabla^N_{e_k}\xi^{\top},V\rangle=\langle \sigma(e_k,\xi^{\top}),V\rangle\\ \\&=0,
\end{array}
$$
which completes the proof.
\end{proof}

Since $\varphi(L\oplus T\Sigma^2)\subset L\oplus T\Sigma^2$ and $\xi\in L\oplus T\Sigma^2$ along the surface, it follows that $R^N(X,Y)Z\in L\oplus T\Sigma^2$ for any $X,Y,Z\in L\oplus T\Sigma^2$. Therefore, by using a result of J. H. Eschenburg and R. Tribuzy (Theorem 2 in \cite{ET}) and the result of H. Endo in \cite{E}, we get

\begin{proposition}\label{p_numb} Let $\Sigma^2$ be an immersed non-minimal pmc surface in a non-flat cosymplectic space form
$N^{2n+1}(\rho)$, $n\geq 2$. If its mean curvature vector is nowhere an umbilical direction, then the surface lies in a cosymplectic space form $N^{r}(\rho)$, where $r\leq 11$.
\end{proposition}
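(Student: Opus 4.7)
The plan is to use Proposition \ref{lemma_parallel} as the input to a general codimension-reduction argument. Setting $F = T\Sigma^2 \oplus L$, I obtain a subbundle of $TN^{2n+1}(\rho)|_{\Sigma^2}$ of fibre dimension at most $11$ that is parallel in $TN^{2n+1}(\rho)$ along $\Sigma^2$ (the tangent part is automatic, the normal part is exactly the content of Proposition \ref{lemma_parallel}) and that contains the image of the second fundamental form, since $\im\sigma \subset L$ by construction.

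The main technical point is that $F$ is also invariant under the ambient curvature operator $R^N$. Inspecting \eqref{eq:curv} shows that, beyond $X,Y,Z \in F$ themselves, the right-hand side involves only $\varphi X$, $\varphi Y$, $\varphi Z$, $\varphi W$ and the characteristic vector field $\xi$. Thus it suffices to prove that (i) $\xi$ is a section of $F$ along $\Sigma^2$, and (ii) $\varphi F \subset F$. Claim (i) is immediate: $\xi = \xi^{\top} + \xi^{\perp}$ with $\xi^{\top} \in T\Sigma^2$ and $\xi^{\perp} \in L$ by the fourth generator listed in $L$. For (ii), given $X \in F$ the tangential part of $\varphi X$ lies in $T\Sigma^2$ while the normal part lies in $L$: if $X$ is tangent this is guaranteed by the $(\varphi(T\Sigma^2))^{\perp}$ generator of $L$; if $X \in \im\sigma$ it is given by the $(\varphi(\im\sigma))^{\perp}$ generator; and for the remaining generators of $L$ one applies $\varphi$ once more and uses $\varphi^{2} = -\id + \eta \otimes \xi$ to reduce to one of the two previous cases together with the $\xi^{\perp}$ summand.

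With $F$ parallel, containing $\im\sigma$, and $R^N$-invariant, the codimension reduction theorem of Eschenburg and Tribuzy (Theorem 2 in \cite{ET}) produces a totally geodesic submanifold $P \subset N^{2n+1}(\rho)$ of dimension $r = \dim F \leq 11$ that contains $\Sigma^2$. To identify $P$ with a cosymplectic space form $N^r(\rho)$ of the same $\varphi$-sectional curvature, I would then invoke Endo's result \cite{E}: because $TP$ is $\varphi$-invariant and $P$ is totally geodesic in the cosymplectic space form $N^{2n+1}(\rho)$, the structure restricted to $P$ is cosymplectic with the same constant $\varphi$-sectional curvature $\rho$. The hardest part is really the pointwise bookkeeping that ensures the $\varphi$-invariance of $F$; once that is secured, the Eschenburg--Tribuzy and Endo theorems combine to deliver the stated reduction with $r \leq 11$.
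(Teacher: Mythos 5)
Your proposal is correct and follows essentially the same route as the paper: form $T\Sigma^2\oplus L$ with $L$ from Proposition \ref{lemma_parallel}, observe that it is parallel, contains $\xi$ and is $\varphi$-invariant (hence $R^N$-invariant by \eqref{eq:curv}), and then apply Theorem 2 of Eschenburg--Tribuzy together with Endo's result. You merely spell out the $\varphi$-invariance bookkeeping that the paper leaves implicit.
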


If we consider the cosymplectic space form $N^{2n+1}(\rho)$ to be the product between a complex space form $M^{n}(\rho)$ and $\mathbb{R}$ and use again the facts that $\varphi(L\oplus T\Sigma^2)\subset L\oplus T\Sigma^2$ and $\xi\in L\oplus T\Sigma^2$, then we have the following

\begin{corollary} Let $\Sigma^2$ be an immersed non-minimal pmc surface in
$M^{n}(\rho)\times\mathbb{R}$, $n\geq 2$, $\rho\neq 0$. If its mean curvature vector is nowhere an umbilical direction, then the surface lies in $M^{r}(\rho)\times\mathbb{R}$, where $r\leq 5$.
\end{corollary}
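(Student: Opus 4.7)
The plan is to upgrade Proposition \ref{p_numb} by exploiting the extra Riemannian product structure of the ambient. The work of Proposition \ref{lemma_parallel} furnishes a parallel normal subbundle $L$ of rank at most $9$, and the noted properties $\varphi(L\oplus T\Sigma^2)\subset L\oplus T\Sigma^2$ together with $\xi\in L\oplus T\Sigma^2$ pointwise along $\Sigma^2$ show that $D:=L\oplus T\Sigma^2$ is a $\varphi$-invariant parallel distribution along $\Sigma^2$ containing the characteristic direction.

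First, I would run the same Eschenburg--Tribuzy integration argument (\cite{ET}) already used in the proof of Proposition \ref{p_numb}, applied to $D$, to obtain a totally geodesic submanifold $\tilde N\subset M^n(\rho)\times\mathbb{R}$ of dimension $s\leq 2+9=11$ containing $\Sigma^2$. The result of H.~Endo (\cite{E}) then guarantees that the ambient cosymplectic structure restricts to make $\tilde N$ itself a cosymplectic space form of the same constant $\varphi$-sectional curvature $\rho$.

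Next, I would promote this to a product decomposition. Because the ambient is a global Riemannian product, $\xi=\partial/\partial t$ is a globally defined parallel unit vector field; since $\xi\in T\tilde N$ along $\Sigma^2$ and $\tilde N$ is totally geodesic, $\xi$ is parallel also when viewed as a vector field on $\tilde N$. Its orthogonal distribution $\xi^{\perp}\cap T\tilde N$ is therefore integrable, totally geodesic in $\tilde N$, and $\varphi$-invariant (as $\varphi\xi=0$ and $\varphi$ preserves $T\tilde N$). By the de Rham decomposition theorem, $\tilde N$ is locally, and globally after passing to the simply-connected cover, a Riemannian product $\tilde M\times\mathbb{R}$, where $\tilde M$ is the leaf of $\xi^{\perp}\cap T\tilde N$ through a chosen basepoint.

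Finally, $\pi(\tilde M)$ is a $J$-invariant totally geodesic submanifold of $M^n(\rho)$ (using that the projection identifies $\varphi|_{\xi^{\perp}}$ with $J$), hence a complex space form $M^r(\rho)$ of the same constant holomorphic sectional curvature $\rho$ and some complex dimension $r$. The count $\dim\tilde N=2r+1\leq 11$ forces $r\leq 5$, yielding $\Sigma^2\subset M^r(\rho)\times\mathbb{R}$ as claimed. The one place requiring care is ensuring that the abstract cosymplectic space form produced by the Eschenburg--Tribuzy/Endo step genuinely splits as a product of this specific form, rather than being merely isometric to such a product; this is precisely what the globally parallel $\xi$ inherited from the ambient product grants us, via the de Rham argument above.
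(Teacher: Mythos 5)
Your proposal is correct and follows essentially the same route as the paper: the authors prove the Corollary by invoking the argument of Proposition \ref{p_numb} together with the facts that $\varphi(L\oplus T\Sigma^2)\subset L\oplus T\Sigma^2$ and $\xi\in L\oplus T\Sigma^2$, which is exactly the Eschenburg--Tribuzy reduction plus the splitting along the parallel field $\xi$ that you carry out. You merely supply the de Rham/vertical-translation details of the splitting and the identification of the horizontal factor with a totally geodesic complex space form, which the paper leaves implicit.
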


\begin{remark}\label{rem_split} Since the map $p\in\Sigma^2\rightarrow(A_H-\mu\id)(p)$,
where $\mu$ is a constant, is analytic, it follows that if $H$ is
an umbilical direction, then this either holds on $\Sigma^2$ or
only for a closed set without interior points. In this second case
$H$ is not an umbilical direction in an open dense set, and then Proposition \ref{lemma_parallel} holds on this set. By continuity it holds on $\Sigma^2$. Consequently, only the two above studied cases can occur.
\end{remark}

Summarizing, we can state

\begin{theorem}\label{th_red} Let $\Sigma^2$ be an immersed non-minimal pmc surface in a non-flat cosymplectic
space form $N^{2n+1}(\rho)$, $n\geq 2$. Then, one of the following holds:
\begin{enumerate}

\item $\Sigma^2$ is pseudo-umbilical and then $\xi\perp T\Sigma^2$,
$\varphi(T\Sigma^2)\subset N\Sigma^2$, $H\perp\xi$, $\varphi H\perp T\Sigma^2$ and $n\geq 3$; or

\item $\Sigma^2$ is not pseudo-umbilical and lies in a cosymplectic space form $N^{r}(\rho)$, where $r\leq 11$.
\end{enumerate}
\end{theorem}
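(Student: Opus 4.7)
The plan is to establish the dichotomy by an analyticity argument, and then invoke the two already-proven Propositions to handle each branch. Concretely, I would consider the map $p \in \Sigma^2 \mapsto (A_H - \mu\id)(p)$, for any constant $\mu$ (in practice $\mu = |H|^2$). Because the immersion is analytic and $H$ is parallel, this map is analytic in $p$, so the set on which it vanishes is either all of $\Sigma^2$ or is closed with empty interior. Thus, either $H$ is an umbilical direction everywhere on $\Sigma^2$ (i.e.\ $\Sigma^2$ is pseudo-umbilical), or the set of non-umbilical points is open and dense. This is exactly the content of Remark \ref{rem_split}, and it reduces the proof of the theorem to these two mutually exclusive cases.

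In the first case, namely when $A_H = |H|^2\id$ on $\Sigma^2$, I would directly invoke Proposition \ref{p_umb}. That proposition provides all the geometric conclusions needed for alternative (1): $\xi \perp T\Sigma^2$, $\varphi(T\Sigma^2) \subset N\Sigma^2$, $H \perp \xi$, $\varphi H \perp T\Sigma^2$, and the dimension estimate $n \geq 3$. No further work is required for this branch.

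In the second case, where $H$ is non-umbilical on an open dense subset $U \subset \Sigma^2$, I would apply Proposition \ref{lemma_parallel} on $U$ to produce a parallel subbundle $L$ of the normal bundle containing $\im\sigma$, of rank at most $9$, and such that $\varphi(L \oplus T\Sigma^2) \subset L \oplus T\Sigma^2$ and $\xi \in L \oplus T\Sigma^2$ pointwise along the surface. By continuity (i.e.\ the density of $U$ together with the continuity of $L$ as a subbundle) the same parallelism and invariance properties extend to all of $\Sigma^2$. This is exactly the input needed for Proposition \ref{p_numb} (via the Eschenburg--Tribuzy reduction combined with Endo's result in the cosymplectic setting), which yields that $\Sigma^2$ lies in a totally geodesic cosymplectic space form $N^r(\rho)$ with $r \leq \dim L + \dim\Sigma^2 \leq 9 + 2 = 11$, giving alternative (2).

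The proof is therefore a synthesis: the analyticity argument produces the dichotomy, and the two Propositions \ref{p_umb} and \ref{p_numb} supply the geometric content of the two alternatives. The only subtle point, and in some sense the main conceptual obstacle, is the passage from the open dense set $U$ where $H$ is nowhere umbilical to all of $\Sigma^2$ in the second case; this is handled by observing that the subbundle $L$ constructed in Proposition \ref{lemma_parallel} is defined by closed algebraic/differential conditions and its parallelism extends by continuity, so the reduction statement holds globally on $\Sigma^2$, not merely on $U$.
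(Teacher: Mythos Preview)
Your proposal is correct and follows essentially the same approach as the paper. The paper presents Theorem \ref{th_red} as a direct summary (``Summarizing, we can state'') of Propositions \ref{p_umb} and \ref{p_numb}, with Remark \ref{rem_split} supplying exactly the analyticity dichotomy and the continuity extension from the open dense set that you describe.
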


\begin{corollary}\label{c_red} Let $\Sigma^2$ be an immersed non-minimal pmc surface in
$N^{2n+1}(\rho)=M^{n}(\rho)\times\mathbb{R}$, where $M^n(\rho)$ is a non-flat complex space form, with complex dimension $n\geq 2$. Then one of the following holds:
\begin{enumerate}
\item $\Sigma^2$ is pseudo-umbilical in $N^{2n+1}(\rho)$ and then it is a pseudo-umbilical non-minimal totally real pmc surface in $M^n(\rho)$ and $n\geq 3$; or

\item $\Sigma^2$ is not pseudo-umbilical in $N^{2n+1}(\rho)$ and then it lies in $M^{r}(\rho)\times\mathbb{R}$, where $r\leq 5$.

\end{enumerate}

\end{corollary}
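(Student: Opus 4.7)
The plan is straightforward: Corollary \ref{c_red} is the synthesis of the two cases already worked out for the product ambient in Section \ref{sred}, glued together by the analyticity observation of Remark \ref{rem_split}.

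First I would invoke that remark: the map $p\mapsto(A_H-\mu\,\id)(p)$ is analytic, so either $H$ is an umbilical direction everywhere on $\Sigma^2$, or it fails to be umbilical on an open dense subset; continuity then lets one extend any conclusion derived on that open dense subset to all of $\Sigma^2$. This produces the clean dichotomy (1) versus (2) that mirrors Theorem \ref{th_red} in the present product setting.

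If $H$ is umbilical everywhere, the conclusion is precisely the content of the corollary stated immediately after Proposition \ref{p_umb}: the surface is pseudo-umbilical in $M^n(\rho)\times\mathbb{R}$ with $\xi\perp T\Sigma^2$, $\varphi(T\Sigma^2)\subset N\Sigma^2$, $H\perp\xi$, $\varphi H\perp T\Sigma^2$ and $n\geq 3$, so that its extrinsic geometry inside $M^n(\rho)\times\mathbb{R}$ coincides with its extrinsic geometry inside the totally geodesic factor $M^n(\rho)$, exhibiting it as a pseudo-umbilical non-minimal totally real pmc surface in $M^n(\rho)$.

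If $H$ is nowhere umbilical, I would invoke the corollary stated after Proposition \ref{p_numb}. The key point, which is the only place where the product structure of the ambient is really used as opposed to an abstract cosymplectic setting, is that the parallel subbundle $L\oplus T\Sigma^2$ of Proposition \ref{lemma_parallel} is both $\varphi$-invariant and contains $\xi$, because $\xi^\perp\subset L$ and $\xi^\top\subset T\Sigma^2$ by the very construction of $L$. The totally geodesic cosymplectic subspace form of dimension at most $11$ provided by the Eschenburg--Tribuzy reduction is therefore tangent to $\xi=\partial/\partial t$ and hence splits as $M^r(\rho)\times\mathbb{R}$ with $2r+1\leq 11$, i.e.\ $r\leq 5$. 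Verifying this product splitting of the reduction submanifold is the main, and essentially only, obstacle; it follows cleanly from the parallelism of $\xi$ in a cosymplectic manifold together with the explicit appearance of $\xi^\perp$ among the generators of $L$.
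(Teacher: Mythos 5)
Your proposal is correct and follows essentially the same route as the paper: the analyticity dichotomy of Remark \ref{rem_split} splits the argument into the everywhere-umbilical case (handled by the corollary following Proposition \ref{p_umb}, using that the product structure makes the ambient connection restrict to that of the factor $M^n(\rho)$ once $\xi\perp T\Sigma^2$ and $H\perp\xi$) and the nowhere-umbilical case (handled by the corollary following Proposition \ref{p_numb}, where the $\varphi$-invariance of $L\oplus T\Sigma^2$ and the inclusion $\xi\in L\oplus T\Sigma^2$ force the Eschenburg--Tribuzy reduction submanifold to split as $M^r(\rho)\times\mathbb{R}$ with $2r+1\leq 11$). Your identification of the product splitting of the reduction submanifold as the one point where the product structure is genuinely used matches the paper's own (terse) justification.
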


\section{Anti-invariant pmc surfaces}\label{santi}

Let $\Sigma^2$ be an immersed non-minimal anti-invariant pmc surface in a non-flat cosymplectic space form $N^{2n+1}(\rho)$ and define a new quadratic form $Q'$ on $\Sigma^2$ by
$$
Q'(X,Y)=8\langle\sigma(X,Y),H\rangle-\rho\eta(X)\eta(Y).
$$
In the same way as in Section \ref{sqf} it can be proved that the $(2,0)$-part of $Q'$ is holomorphic.

In the following, we shall assume that the $(2,0)$-parts of $Q$ and $Q'$ vanish on the surface, i.e. the following equations hold on $\Sigma^2$:
\begin{equation}\label{eq:q'e}
\begin{cases}
8|H|^2\langle\sigma(e_1,e_1)-\sigma(e_2,e_2),H\rangle-\rho|H|^2((\eta(e_1))^2-(\eta(e_2))^2)\\+3\rho(\langle\varphi e_1,H\rangle^2-\langle\varphi e_2,H\rangle^2)=0\\8|H|^2\langle\sigma(e_1,e_2),H\rangle-\rho|H|^2\eta(e_1)\eta(e_2)+3\rho\langle\varphi e_1,H\rangle\langle\varphi e_2,H\rangle)=0
\end{cases}
\end{equation}
and
\begin{equation}\label{eq:q''e}
\begin{cases}
8\langle\sigma(e_1,e_1)-\sigma(e_2,e_2),H\rangle-\rho((\eta(e_1))^2-(\eta(e_2))^2)=0\\
8\langle\sigma(e_1,e_2),H\rangle-\rho\eta(e_1)\eta(e_2)=0,
\end{cases}
\end{equation}
where $\{e_1,e_2\}$ is an orthonormal frame on the surface.

From \eqref{eq:q''e} it results that $\xi$ is orthogonal to the surface at a point $p$ if and only if $H$ is an umbilical direction at $p$.  Therefore, using Remark \ref{rem_split}, we obtain that either $\xi$ is orthogonal to the surface at any point or this holds only in a closed set without interior points. From Theorem \ref{th_red} we know that the first case is possible only for $n\geq 3$.

Next, if $\xi_p$ is tangent to the surface at any point $p$ in an open, connected subset of $\Sigma^2$, it follows that the Gaussian curvature $K$ of $\Sigma^2$ vanishes on this set, since $\xi$ is parallel. Therefore, $K$ vanishes on the whole surface, and this cannot occur for $2$-spheres. We however studied this case in Section \ref{scyl}, where $N^{2n+1}(\rho)$ is the product between a non-flat complex space form and the Euclidean line $\mathbb{R}$. In general, for a surface in an arbitrary cosymplectic space form $N^{2n+1}(\rho)$, we can choose an orthonormal frame $\{e_1,\xi\}$ on the surface, and easily see that $\sigma(\xi,\xi)=0$, $\sigma(e_1,\xi)=0$ and $\sigma(e_1,e_1)=2H$. Moreover, from \eqref{eq:q'e} and \eqref{eq:q''e}, we have that $H\perp\varphi e_1$ and $\rho=-16|H|^2$.

\begin{remark} We shall use now an argument in \cite{AdCT}, in order to show that either $\xi$ is tangent to $\Sigma^2$ everywhere or this holds only in a closed set without interior points. Let $f:\Sigma^2\rightarrow\mathcal{L}(N\Sigma^2,\mathbb{R})$ be the map that takes any point $p\in\Sigma^2$ to the linear function $f_p$ on $N_p\Sigma^2$, given by $f_p(X_p)=\eta_p(X_p)$, for any normal vector $X_p$ at $p$. Obviously, $\xi$ is tangent to the surface at $p\in\Sigma^2$ if and only if $f_p$ vanishes identically. By analyticity, either $f$ is identically zero on the surface or the set of its zeroes is closed and without interior points. 
\end{remark}

Now, in order to treat the case where $\xi$ has non-vanishing tangent and normal components in an open dense set $T\subset\Sigma^2$, we shall split our study in two cases, as $n=2$ or $n\geq 3$. We will work in the open dense set $T$ and all results obtained below, that hold on this set, actually hold on $\Sigma^2$, by continuity. 
 
{\bf\underline{Case I: $n=2$.}} Let us consider the orthonormal basis $\{e_1,e_2\}$ in $T_p\Sigma^2$ for any $p\in T$, where $e_2=\frac{\xi^{\top}}{|\xi^{\top}|}$ is the unit vector in the direction of the projection of $\xi$ on the tangent space. Then, since $\eta(e_1)=0$ and, from \eqref{eq:q'e} and \eqref{eq:q''e}, we have $\varphi e_1\perp H$ and $\varphi e_2\perp H$, it follows that $\{e_1,e_2,e_3=\varphi e_1,e_4=\frac{\varphi e_2}{|\varphi e_2|},e_5=\frac{H}{|H|}\}$ is an orthonormal basis in $T_pN^5$. Observe that, at any point $p\in T$, the characteristic vector field $\xi$ can be written as
\begin{equation}\label{eq:xi}
\xi=\mu e_2+\nu e_5,
\end{equation}
where $\mu=\eta(e_2)$ and $\nu=\eta(e_5)=\frac{\eta(H)}{|H|}$, is called the \textit{angle function}.

Next, from the second equation of \eqref{eq:q''e}, we get that $\{e_1,e_2\}$ diagonalizes $A_H$. Moreover, using the Ricci equation, one obtains that $\{e_1,e_2\}$ also diagonalizes $A_{\varphi e_1}$ and $A_{\varphi e_2}$, since $\langle R^N(e_1,e_2)H,\varphi e_1\rangle=\langle R^N(e_1,e_2)H,\varphi e_2\rangle=0$.

Finally, the first equation of \eqref{eq:q''e} leads to
\begin{equation}\label{eq:ah}
A_{e_5}=\left(\begin{array}{cc}\lambda_1&0\\0&\lambda_2\end{array}\right)
=\left(\begin{array}{cc}|H|(1-\frac{\rho}{16|H|^2}\mu^2)&0
\\0&|H|(1+\frac{\rho}{16|H|^2}\mu^2)\end{array}\right).
\end{equation}

\begin{lemma}\label{lemma:calc5} The following identities hold:
\begin{enumerate}
\item $e_1(\mu)=e_1(\nu)=0$;

\item $e_2(\mu)=\lambda_2\nu$ and $e_2(\nu)=-\lambda_2\mu$;

\item $\nabla_{e_1}e_1=-\lambda_1\frac{\nu}{\mu}e_2$ and $\nabla_{e_2}e_2=0$;

\item $\sigma(e_i,e_i)=\frac{\lambda_i}{|H|}H$, $i\in\{1,2\}$.

\end{enumerate}
\end{lemma}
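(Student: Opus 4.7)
The plan is to exploit the single identity $\nabla^N\xi=0$ (characteristic of the cosymplectic structure), together with the decomposition $\xi=\mu e_2+\nu e_5$ in \eqref{eq:xi} and the facts already established: $\nabla^{\perp}H=0$ with $|H|$ constant, the frame $\{e_1,e_2\}$ diagonalizes $A_{e_5}$ with eigenvalues $\lambda_1,\lambda_2$ given in \eqref{eq:ah}, and $\langle\sigma(e_1,e_2),H\rangle=0$ by the second equation of \eqref{eq:q''e} combined with $\eta(e_1)=0$.

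I would first differentiate $\xi$ in the direction $e_1$. Writing $\nabla^N_{e_1}e_2=\nabla_{e_1}e_2+\sigma(e_1,e_2)$ and $\nabla^N_{e_1}e_5=-\lambda_1e_1+\nabla^{\perp}_{e_1}e_5$, and noting that $\nabla^{\perp}_{e_1}e_5=|H|^{-1}\nabla^{\perp}_{e_1}H=0$, the vanishing of $\nabla^N_{e_1}\xi$ splits into a tangential part
$$e_1(\mu)e_2+\mu\nabla_{e_1}e_2-\nu\lambda_1e_1=0$$
and a normal part
$$\mu\,\sigma(e_1,e_2)+e_1(\nu)e_5=0.$$
Since $\nabla_{e_1}e_2=\langle\nabla_{e_1}e_2,e_1\rangle e_1$, taking the $e_2$-component of the tangential equation gives $e_1(\mu)=0$, and the $e_1$-component yields $\nabla_{e_1}e_1=-\lambda_1\frac{\nu}{\mu}e_2$ in the open set $T$ where $\mu\neq0$. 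Testing the normal equation against $e_5$ and using $\langle\sigma(e_1,e_2),e_5\rangle=0$ gives $e_1(\nu)=0$, which in turn forces $\sigma(e_1,e_2)=0$; this settles item (1), half of (3) and the off-diagonal vanishing we will need.

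I would then repeat the procedure with $e_2$. The same decomposition, using $\nabla^N_{e_2}e_5=-\lambda_2e_2$, produces the tangential identity
$$e_2(\mu)e_2+\mu\nabla_{e_2}e_2-\nu\lambda_2e_2=0$$
and the normal identity
$$\mu\,\sigma(e_2,e_2)+e_2(\nu)e_5=0.$$
The tangential one immediately gives $\nabla_{e_2}e_2=0$ (its $e_1$-component) and $e_2(\mu)=\lambda_2\nu$ (its $e_2$-component). Pairing the normal one with $e_5$ and recalling $\langle\sigma(e_2,e_2),e_5\rangle=\lambda_2$ from \eqref{eq:ah} yields $e_2(\nu)=-\lambda_2\mu$, completing items (2) and (3).

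For item (4), I would extract the full vector content of the two normal equations. Taking the components of $\mu\,\sigma(e_2,e_2)+e_2(\nu)e_5=0$ along $e_3=\varphi e_1$ and $e_4$ gives $\langle\sigma(e_2,e_2),\varphi e_i\rangle=0$ for $i=1,2$, so $\sigma(e_2,e_2)$ is entirely in the $e_5$ direction; combined with $\langle\sigma(e_2,e_2),e_5\rangle=\lambda_2$, this yields $\sigma(e_2,e_2)=\lambda_2 e_5=\frac{\lambda_2}{|H|}H$. The formula for $\sigma(e_1,e_1)$ then follows from $\sigma(e_1,e_1)+\sigma(e_2,e_2)=2H=2|H|e_5$ together with the relation $\lambda_1+\lambda_2=2|H|$ visible in \eqref{eq:ah}. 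The main (minor) subtlety is keeping track of which components live in the tangent space and which in the normal bundle when splitting $\nabla^N\xi=0$, but everything then follows mechanically from $\nabla^N\xi=0$ and the diagonal form of $A_{e_5}$.
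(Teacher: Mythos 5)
Your proposal is correct and follows essentially the same route as the paper: decompose $\nabla^N_{e_i}\xi=0$ with $\xi=\mu e_2+\nu e_5$ into tangential and normal parts and read off the identities, using that $e_5=H/|H|$ is parallel in the normal bundle and that $A_{e_5}$ is diagonal in $\{e_1,e_2\}$. The only (harmless) differences are bookkeeping: you rederive $\sigma(e_1,e_2)=0$ from the normal part of the $e_1$-equation rather than invoking the already established fact that $\{e_1,e_2\}$ diagonalizes every shape operator, and you obtain $e_2(\nu)=-\lambda_2\mu$ from the normal equation paired with $e_5$ where the paper gets it from $\mu^2+\nu^2=1$.
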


\begin{proof} The fact that $\xi$ is parallel, \eqref{eq:xi} and \eqref{eq:ah} imply that
$$
\begin{array}{lll}
0&=&\nabla^N_{e_1}\xi=\nabla^N_{e_1}(\mu e_2+\nu e_5)
\\ \\&=&e_1(\mu)e_2+\mu\nabla_{e_1}e_2-\nu A_{e_5}e_1+\mu\sigma(e_1,e_2)+e_1(\nu)e_5
\\ \\&=&e_1(\mu)e_2+\mu\nabla_{e_1}e_2-\lambda_1\nu e_1+e_1(\nu)e_5.
\end{array}
$$
The tangent and the normal part in the right hand side vanish and then, since $\nabla_{e_1}e_2\perp e_2$, it follows that $e_1(\mu)=e_1(\nu)=0$ and $\nabla_{e_1}e_2=\lambda_1\frac{\nu}{\mu}e_1$. As $\langle\nabla_{e_1}e_2,e_1\rangle+\langle\nabla_{e_1}e_1,e_2\rangle=0$ and $\nabla_{e_1}e_1\perp e_1$, the last identity is equivalent to $\nabla_{e_1}e_1=-\lambda_1\frac{\nu}{\mu}e_2$.

In the same way, we get
$$
\begin{array}{lll}
0&=&\nabla^N_{e_2}\xi=\nabla^N_{e_2}(\mu e_2+\nu e_5)
\\ \\&=&e_2(\mu)e_2+\mu\nabla_{e_2}e_2-\nu A_{e_5}e_2+\mu\sigma(e_2,e_2)+e_2(\nu)e_5
\\ \\&=&e_2(\mu)e_2+\mu\nabla_{e_2}e_2-\lambda_2\nu e_2+e_2(\nu)e_5+\mu\sigma(e_2,e_2)
\end{array}
$$
and then $\nabla_{e_2}e_2=0$, $e_2(\mu)=\lambda_2\nu$ and, since $\mu^2+\nu^2=1$, $e_2(\nu)=-\lambda_2\mu$. We also obtain that $\sigma(e_2,e_2)=-\frac{e_2(\nu)}{\mu}e_5=\frac{\lambda_2}{|H|}H$ and $\sigma(e_1,e_1)=2H-\sigma(e_2,e_2)=\frac{\lambda_1}{|H|}H$.
\end{proof}

\begin{remark} A direct consequence of the previous Lemma is that $A_{\varphi e_1}$ and $A_{\varphi e_2}$ vanish and then the only non-zero component of $A$ is $A_{e_5}$.
\end{remark}

Now, assume that the characteristic vector field $\xi$ is either tangent to the surface or it has non vanishing tangent and normal components in an open dense set $T\subset\Sigma^2$, and consider the subbundle of the normal bundle $L=\im\sigma$. It is easy to see that $L$ is parallel, $\dim L=1$, $\varphi X\perp Y$, for any $X,Y\in T\Sigma^2\oplus L$ and that $T\Sigma^2\oplus L$ is invariant by $R^N$, since $\xi\in T\Sigma^2\oplus L$ along the surface.

On the other hand, any non-minimal cmc surface immersed in an anti-invariant totally geodesic $3$-dimensional submanifold of $N^5(\rho)$ is an immersed non-minimal anti-invariant pmc surface in $N^5(\rho)$. Moreover, if we assume that the $(2,0)$-part of $Q'$ vanishes on such a surface, it follows that also the $(2,0)$-part of $Q$ vanishes.

Therefore, using Theorem $2$ in \cite{ET}, we get

\begin{theorem} A surface $\Sigma^2$ can be immersed as a non-minimal anti-invariant pmc surface in a non-flat cosymplectic space form $N^5(\rho)$, with vanishing $(2,0)$-parts of the quadratic forms $Q$ and $Q'$, if and only if $\Sigma^2$ is an immersed non-minimal cmc surface in a $3$-dimensional totally geodesic anti-invariant submanifold of $N^5(\rho)$, such that the $(2,0)$-part of $Q'$ vanishes.
\end{theorem}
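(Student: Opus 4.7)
The statement is an equivalence, so my plan splits the argument into two directions. For the forward direction, I intend to use the subbundle $L=\im\sigma$ of $N\Sigma^2$ introduced just before the theorem statement, for which the preceding discussion records four crucial properties: $L$ is parallel in the normal connection, $\dim L=1$, $\varphi X\perp Y$ for all $X,Y\in T\Sigma^2\oplus L$, and $T\Sigma^2\oplus L$ is invariant under $R^N$ (using, among other things, that $\xi\in T\Sigma^2\oplus L$ along $\Sigma^2$). These are exactly the hypotheses of the reduction of codimension theorem of Eschenburg and Tribuzy (Theorem~$2$ in \cite{ET}), which I invoke to conclude that $\Sigma^2$ is contained in a totally geodesic $3$-dimensional submanifold $P^3\subset N^5(\rho)$ whose tangent bundle along $\Sigma^2$ coincides with $T\Sigma^2\oplus L$. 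The anti-invariance of $P^3$, i.e.\ $\varphi(TP^3)\subset NP^3$, is immediate from $\varphi X\perp Y$ for $X,Y\in TP^3$. Since $\im\sigma\subset L\subset TP^3$, the second fundamental form of $\Sigma^2\hookrightarrow P^3$ agrees with $\sigma$, so $H$ is also the mean curvature vector of $\Sigma^2\hookrightarrow P^3$; its parallelism in the rank-$1$ bundle $N_{P^3}\Sigma^2$ simply reads $|H|=\cst$, and $Q'^{(2,0)}=0$ survives because $Q'$ depends only on quantities defined by the immersion $\Sigma^2\hookrightarrow P^3$.

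For the converse direction, I take a non-minimal cmc surface $\Sigma^2\hookrightarrow P^3\subset N^5(\rho)$ with $P^3$ totally geodesic and anti-invariant and $Q'^{(2,0)}=0$. Total geodesicity forces $\nabla^N$ to restrict to the Levi-Civita connection of $P^3$ on vector fields tangent to $P^3$, so the mean curvature vector of $\Sigma^2$ in $N^5(\rho)$ coincides with that in $P^3$ and remains parallel in the full normal bundle $N\Sigma^2$. Anti-invariance of $P^3$ gives $\varphi(T\Sigma^2)\subset \varphi(TP^3)\subset NP^3\subset N\Sigma^2$, so $\Sigma^2$ is anti-invariant in $N^5(\rho)$. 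Finally, for every $X\in T\Sigma^2$ we have $\varphi X\in NP^3$ while $H\in TP^3$, so $\langle\varphi X,H\rangle=0$; together with $|H|=\cst$, this collapses the defining formula of $Q$ to $Q=|H|^2\,Q'$, and $Q^{(2,0)}=0$ is inherited from $Q'^{(2,0)}=0$.

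The only nontrivial ingredient is the application of the Eschenburg-Tribuzy theorem in the forward direction, which demands both the parallelism of $T\Sigma^2\oplus L$ (coming from parallelism of $L$ via the Gauss formula) and its $R^N$-invariance (using the explicit form \eqref{eq:curv} of $R^N$ together with the $\varphi$-orthogonality and the inclusion $\xi\in T\Sigma^2\oplus L$ along $\Sigma^2$). All these properties have already been established in the paragraph preceding the theorem, building on Lemma~\ref{lemma:calc5} and the combined vanishing of $Q^{(2,0)}$ and $Q'^{(2,0)}$; the present proof only needs to assemble them and feed the result into \cite{ET}.
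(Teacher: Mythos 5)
Your proposal is correct and follows essentially the same route as the paper: the forward direction applies the Eschenburg--Tribuzy reduction theorem to the parallel rank-one bundle $L=\im\sigma$ with the $\varphi$-orthogonality and $R^N$-invariance of $T\Sigma^2\oplus L$ established in the preceding discussion, and the converse uses that total geodesicity and anti-invariance of $P^3$ force $\langle\varphi X,H\rangle=0$, so that $Q=|H|^2Q'$ and $Q^{(2,0)}=0$ follows from $Q'^{(2,0)}=0$. The only difference is that you spell out the identity $Q=|H|^2Q'$ and the rank-one parallelism argument explicitly, which the paper leaves implicit.
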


The $3$-dimensional totally geodesic anti-invariant submanifolds of $M^2(\rho)\times\mathbb{R}$, where $M^2$ is a non-flat complex space form, are $\bar M^2\times\mathbb{R}$, where $\bar M^2$ is a totally geodesic Lagrangian submanifold of $M^2(\rho)$. B.-Y. Chen and K. Ogiue proved in \cite{CO} (Proposition 3.2) that a totally geodesic totally real submanifold $\bar M^m$ of a non-flat complex space form $M^n(\rho)$ is necessarily a space form with constant curvature $\frac{\rho}{4}$. Moreover, it is known that $\mathbb{S}^2(\frac{\rho}{4})$ and $\mathbb{H}^2(\frac{\rho}{4})$ can be isometrically immersed as totally geodesic Lagrangian submanifolds in $\mathbb{C}P^2(\rho)$ and $\mathbb{C}H^2(\rho)$, respectively (see \cite{CMU}).

Hence, an immersed non-minimal anti-invariant surface pmc surface in $M^2(\rho)\times\mathbb{R}$ on which the $(2,0)$-parts of $Q$ and $Q'$ vanish, is a non-minimal cmc surface in $\bar M^2(\frac{\rho}{4})\times\mathbb{R}$ with vanishing $(2,0)$-part of $Q'$, which in this case is just the Abresch-Rosenberg differential introduced in \cite{AR}, where $\bar M^2(\frac{\rho}{4})$ is a complete simply-connected surface with constant curvature $\frac{\rho}{4}$. U. Abresch and H. Rosenberg proved there are four classes of such surfaces, the first three of them, namely the cmc spheres $S_H^2\subset\bar M^2(\frac{\rho}{4})\times\mathbb{R}$ of Hsiang and Pedrosa, their non-compact cousins $D_H^2$ and the surfaces of catenoidal type $C_H^2$, being embedded and rotationally invariant, and the fourth one being comprised of parabolic surfaces $P_H^2$ (see \cite{AR} and \cite{AR2} for detailed description of all these surfaces).

\begin{corollary}\label{cor:class} Any immersed non-minimal anti-invariant pmc surface in $M^2(\rho)\times\mathbb{R}$ with vanishing $(2,0)$-parts of the quadratic forms $Q$ and $Q'$ is one of the surfaces $S_H^2$, $D_H^2$, $C_H^2$ and $P_H^2$ in the product space $\bar M^2(\frac{\rho}{4})\times\mathbb{R}$.
\end{corollary}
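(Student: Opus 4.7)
The plan is to assemble the result by chaining the preceding theorem with the two classification facts the authors have just recalled (Chen--Ogiue for totally geodesic Lagrangian submanifolds and Abresch--Rosenberg for cmc surfaces with vanishing holomorphic differential). Let $\Sigma^2\subset M^2(\rho)\times\mathbb{R}$ satisfy the hypotheses. Applying the preceding theorem to the cosymplectic space form $N^5(\rho)=M^2(\rho)\times\mathbb{R}$ produces a $3$-dimensional totally geodesic anti-invariant submanifold $\Sigma^3$ of $M^2(\rho)\times\mathbb{R}$ containing $\Sigma^2$, such that $\Sigma^2$ is a non-minimal cmc surface in $\Sigma^3$ and the $(2,0)$-part of $Q'$ still vanishes there.

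The next step is to identify $\Sigma^3$. Since $\xi=\partial/\partial t$ is parallel in the product $M^2(\rho)\times\mathbb{R}$ and $\Sigma^3$ is totally geodesic, either $\xi$ is tangent to $\Sigma^3$ everywhere or it is nowhere tangent. In the second case $\Sigma^3$ would project isometrically onto a $3$-dimensional totally real submanifold of $M^2(\rho)$; but anti-invariance forces $2\dim \Sigma^3\leq\dim_{\mathbb{R}} M^2(\rho)=4$, a contradiction. Hence $\xi$ is tangent, and the de Rham decomposition yields $\Sigma^3=\bar M^2\times\mathbb{R}$ with $\bar M^2$ a totally geodesic submanifold of $M^2(\rho)$. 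Anti-invariance of $\Sigma^3$, combined with $\varphi|_{TM^2(\rho)}=J$, then says $J(T\bar M^2)\subset N\bar M^2$, and because $\dim \bar M^2=2=\dim_{\mathbb{C}} M^2(\rho)$ this means $\bar M^2$ is Lagrangian in $M^2(\rho)$. Invoking Proposition~3.2 of \cite{CO} forces $\bar M^2$ to have constant sectional curvature $\rho/4$, and the existence of such embeddings in $\mathbb{C}P^2(\rho)$ and $\mathbb{C}H^2(\rho)$ is already recalled from \cite{CMU}.

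Finally, in the $3$-dimensional product $\bar M^2(\rho/4)\times\mathbb{R}$, the quadratic form $Q'$ reduces to the Abresch--Rosenberg differential of \cite{AR}, as noted just before the statement of the corollary. Hence $\Sigma^2$ is a non-minimal cmc surface in $\bar M^2(\rho/4)\times\mathbb{R}$ with vanishing Abresch--Rosenberg differential. The classification theorem of Abresch and Rosenberg in \cite{AR,AR2} lists exactly four families of such surfaces: the rotationally invariant spheres $S_H^2$, their non-compact cousins $D_H^2$, the catenoidal-type surfaces $C_H^2$, and the parabolic surfaces $P_H^2$. Substituting gives the desired conclusion.

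There is essentially no computational obstacle: the whole proof is an exercise in assembly. The only step requiring a small independent argument is the splitting $\Sigma^3=\bar M^2\times\mathbb{R}$ with $\bar M^2$ Lagrangian, which I handle above by ruling out the non-tangential case via the dimension inequality $2\dim\Sigma^3\leq\dim_{\mathbb{R}} M^2(\rho)$ forced by anti-invariance.
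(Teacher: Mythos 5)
Your proposal is correct and follows essentially the same route as the paper: apply the preceding theorem, identify the $3$-dimensional totally geodesic anti-invariant submanifolds of $M^2(\rho)\times\mathbb{R}$ as $\bar M^2\times\mathbb{R}$ with $\bar M^2$ totally geodesic Lagrangian, invoke Chen--Ogiue to get curvature $\rho/4$, note that $Q'$ becomes the Abresch--Rosenberg differential, and quote their four-family classification. The only difference is that you supply an explicit dimension-count argument (forcing $\xi$ tangent and the splitting $\Sigma^3=\bar M^2\times\mathbb{R}$) for a step the paper merely asserts, which is a welcome, correct addition.
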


Therefore, we have

\begin{theorem} Any immersed non-minimal anti-invariant pmc $2$-sphere in a non-flat cosymplectic space form $M^2(\rho)\times\mathbb{R}$ is one of the embedded rotationally invariant cmc spheres $S_H^2\subset\bar M^2(\frac{\rho}{4})\times\mathbb{R}$.
\end{theorem}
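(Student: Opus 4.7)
The plan is to reduce the theorem to the classification already established in Corollary \ref{cor:class}, using the topology of $\Sigma^2$ to force both $(2,0)$-parts to vanish, and then to rule out the non-spherical possibilities.

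First, recall that both $Q$ and $Q'$ have holomorphic $(2,0)$-parts on an anti-invariant pmc surface in a cosymplectic space form (the first by the theorem in Section \ref{sqf}, the second by the observation at the beginning of Section \ref{santi}). On a topological $2$-sphere, however, every holomorphic quadratic differential vanishes identically, since the complex vector space of such differentials on $\mathbb{S}^2$ is trivial (this is the classical Hopf observation used in \cite{HH} and in every subsequent Hopf-type argument in the paper). Thus both $Q^{(2,0)}$ and $Q'^{(2,0)}$ vanish on $\Sigma^2$, and the hypotheses of Corollary \ref{cor:class} are satisfied.

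Applying Corollary \ref{cor:class}, $\Sigma^2$ must be one of the four Abresch--Rosenberg surfaces $S_H^2$, $D_H^2$, $C_H^2$, $P_H^2$ inside the product $\bar M^2(\tfrac{\rho}{4})\times\mathbb{R}$. It remains to identify which of these are homeomorphic to $\mathbb{S}^2$. From the explicit descriptions in \cite{AR,AR2}, only the rotationally invariant spheres $S_H^2$ of Hsiang and Pedrosa are compact with spherical topology: the surfaces $D_H^2$ are non-compact ``undulating'' analogues, the catenoidal surfaces $C_H^2$ are topological cylinders (or annuli), and the parabolic surfaces $P_H^2$ are likewise non-compact. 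Since $\Sigma^2\cong\mathbb{S}^2$ by hypothesis, the only remaining possibility is $\Sigma^2=S_H^2$, which is embedded and rotationally invariant. This proves the theorem.

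The main obstacle here is a bookkeeping one rather than a technical one: making sure the two preliminary arguments---that $Q^{(2,0)}$ is indeed holomorphic under only the anti-invariant pmc assumption, and that the classical classification of Abresch--Rosenberg surfaces in $\bar M^2(\tfrac{\rho}{4})\times\mathbb{R}$ applies verbatim in our setting---are invoked correctly. Everything else reduces to citing \cite{AR,AR2} for the topological classification of the four families and to the trivial Riemann--Roch fact that $H^0(\mathbb{S}^2, K^{\otimes 2})=0$; no further computation should be required.
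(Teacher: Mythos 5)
Your proposal is correct and follows the same route as the paper: since $\Sigma^2$ is a topological sphere, the holomorphic $(2,0)$-parts of $Q$ and $Q'$ vanish identically, Corollary \ref{cor:class} then places $\Sigma^2$ among the four Abresch--Rosenberg families in $\bar M^2(\frac{\rho}{4})\times\mathbb{R}$, and only $S_H^2$ has spherical topology. The paper leaves these steps implicit ("Therefore, we have"), whereas you spell them out, but the argument is identical.
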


\begin{remark} A surface $\Sigma^2$ immersed in a cosymplectic space form is called a \textit{slant surface} if for all vectors $X$ tangent to $\Sigma^2$ and orthogonal to $\xi$ the angle $\theta$ between $\varphi X$ and $T_p\Sigma^2$ is constant, i.e. $\theta$ does not depend on $X$ or on the point $p$ on the surface. Obviously, the invariant and anti-invariant surfaces are slant surfaces. A slant surface which is neither invariant nor anti-invariant is called a proper slant surface. If $\Sigma^2$ is a proper slant surface then $\xi$ is orthogonal to the surface (see \cite{L}). It follows that, if $\Sigma^2$ is an immersed proper slant surface in $M^2(\rho)\times\mathbb{R}$, then it lies in $M^2(\rho)$. On the other hand, there are no non-minimal pmc $2$-spheres in a non-flat complex space form $M^2(\rho)$ (see \cite{H}). Therefore, $S_H^2\subset\bar M^2(\frac{\rho}{4})\times\mathbb{R}$ are the only non-minimal slant pmc $2$-spheres in $M^2(\rho)\times\mathbb{R}$.
\end{remark}

In the following, we shall see that Lemma \ref{lemma:calc5} allows us to make some considerations about the admissible range of the angle function $\nu$.

Let $\Sigma^2$ be a surface as in Corollary \ref{cor:class} with parallel mean curvature vector $H$. From Lemma \ref{lemma:calc5}, it follows, after a straightforward computation, that
\begin{equation}\label{D1}
\Delta\nu^2=2\lambda_2^2(1-3\nu^2)
\end{equation}
and
\begin{equation}\label{D2}
\Delta|A|^2=\frac{\rho^2}{32|H|^2}\lambda_2^2\mu^2(5\nu^2-1).
\end{equation}

Assume now that the surface is complete and $K\geq 0$, so that $\Sigma^2$ is a parabolic space.

If $\nu^2\geq\frac{1}{5}$ on an open dense subset of $\Sigma^2$, then, from \eqref{D2}, it follows that $|A|^2$ is a subharmonic function, and, since $|A|^2$ is bounded by \eqref{eq:ah}, we get that either $\lambda_2^2=0$ or $\mu^2=0$ or $\nu^2=\frac{1}{5}$. J. M. Espinar and H. Rosenberg proved in \cite{ER} that if the angle function $\nu$ is constant, then $\nu^2=0$ or $\nu^2=1$, the second case being possible only when the surface is minimal. Therefore, since we also know that $\mu^2$ cannot vanish on an open dense subset of $\Sigma^2$, one obtains that $\lambda_2^2=|H|^2(1+\frac{\rho}{16|H|^2}\mu^2)^2=0$, and then that $\mu$ and $\nu$ are constant, which means that $\nu^2=0$ and $\mu^2=1$. But this is a contradiction, since we assumed that $\nu^2\geq\frac{1}{5}$.

If $\nu^2\leq\frac{1}{3}$ on an open dense subset of $\Sigma^2$, then, from \eqref{D1}, in the same way as above, we obtain that $\nu^2=0$, $K=0$ and $\rho=-16|H|^2$. In this case, $\Sigma^2$ is a vertical cylinder over a circle in $\mathbb{H}^2(-4|H|^2)$, with curvature $\kappa=2|H|$ and complex torsion equal to $0$ (see also \cite{ER}).

Next, if $\Sigma^2$ is compact, from \eqref{D2} and the divergence theorem, we get that if $\nu^2\geq\frac{1}{5}$ then $\nu=0$, which is a contradiction. From \eqref{D1}, again using the divergence theorem, we obtain that, if  $\nu^2\leq\frac{1}{3}$ on $\Sigma^2$, then the surface is a cylinder, which is also a contradiction, since we assumed that $\Sigma^2$ is compact.

Summarizing, we proved the following

\begin{proposition} Let $\Sigma^2$ be a complete non-minimal cmc surface in $\bar M^2(\frac{\rho}{4})\times\mathbb{R}$ with vanishing Abresch-Rosenberg differential and non-negative Gaussian curvature. Then we have that:
\begin{enumerate}
\item $\nu^2\geq\frac{1}{5}$ cannot occur on an open dense subset of $\Sigma^2$;

\item if $\nu^2\leq\frac{1}{3}$ on an open dense subset of $\Sigma^2$, then $\nu$ vanishes identically and the surface is a vertical cylinder over a circle in $\mathbb{H}^2(-4|H|^2)$, with curvature $\kappa=2|H|$ and complex torsion equal to $0$.
\end{enumerate}
\end{proposition}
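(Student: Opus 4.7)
The strategy is to extract two bounded functions on $\Sigma^2$ whose Laplacians can be explicitly computed from Lemma \ref{lemma:calc5}, and then invoke parabolicity (which follows from completeness and $K\geq 0$) to force constancy. The key identities I would derive are
\begin{equation*}
\Delta\nu^2 = 2\lambda_2^2(1-3\nu^2), \qquad \Delta|A|^2 = \frac{\rho^2}{32|H|^2}\lambda_2^2\mu^2(5\nu^2-1).
\end{equation*}
For $\Delta\nu^2$, I would apply $\sum_i\bigl(e_i(e_i(\nu^2))-(\nabla_{e_i}e_i)(\nu^2)\bigr)$, inserting $e_1(\nu)=0$, $e_2(\nu)=-\lambda_2\mu$, $\nabla_{e_1}e_1=-\lambda_1(\nu/\mu)e_2$, and $\nabla_{e_2}e_2=0$. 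For $\Delta|A|^2$, I would expand $|A|^2=\lambda_1^2+\lambda_2^2$ using $\lambda_1+\lambda_2=2|H|$ constant and $\lambda_1-\lambda_2$ proportional to $\mu^2$, and reuse the previous derivatives. Boundedness of both $\nu^2$ and $|A|^2$ is immediate from $\mu^2+\nu^2=1$ and the explicit form \eqref{eq:ah} of $A_{e_5}$.

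For item (1), the hypothesis $\nu^2\geq\frac{1}{5}$ on an open dense set makes $\Delta|A|^2\geq 0$; the bounded subharmonic function $|A|^2$ on the parabolic surface $\Sigma^2$ must then be constant, so the right-hand side of the second identity vanishes pointwise. This forces $\lambda_2\equiv 0$, since $\mu$ cannot vanish on an open dense set (otherwise $\xi$ would be tangent, handled separately in Section \ref{scyl}), while the Espinar-Rosenberg dichotomy \cite{ER} rules out any value of constant $\nu$ other than $0$ or $\pm 1$ (the latter only in the minimal case). Plugging $\lambda_2\equiv 0$ back into $e_2(\mu)=\lambda_2\nu$ makes $\mu$ and $\nu$ constant; by \cite{ER} this forces $\nu\equiv 0$, contradicting $\nu^2\geq\frac{1}{5}$.

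For item (2), $\nu^2\leq\frac{1}{3}$ on an open dense set turns $\Delta\nu^2\geq 0$ into bounded subharmonicity of $\nu^2$, so $\nu^2$ is constant; by \cite{ER} we conclude $\nu\equiv 0$. Plugging $\nu=0$ (so $\mu^2=1$) into the first identity forces $\lambda_2=0$, and $\lambda_2=|H|\bigl(1+\frac{\rho}{16|H|^2}\mu^2\bigr)=0$ then yields $\rho=-16|H|^2$. The Gauss equation (together with $\sigma(\xi,\cdot)=0$ when $\xi$ is tangent) produces $K\equiv 0$, so $\Sigma^2$ is a vertical cylinder, and the analysis of Section \ref{scyl} identifies it uniquely as the cylinder over a circle of curvature $2|H|$ and vanishing complex torsion in $\mathbb{H}^2(-4|H|^2)$; the vanishing of the complex torsion is automatic since $\bar M^2(\rho/4)$ is Lagrangian in $M^2(\rho)$. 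In the compact case the divergence theorem replaces parabolicity: a non-negative Laplacian with vanishing integral is zero pointwise, and the same chain of implications closes. The main obstacle is deriving the two Laplacian identities, which requires juggling all four conclusions of Lemma \ref{lemma:calc5} simultaneously and exploits the specific algebraic dependence of $\lambda_1,\lambda_2$ on $\mu^2$.
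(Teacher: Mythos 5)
Your proposal is correct and follows essentially the same route as the paper: the same two Laplacian identities $\Delta\nu^2=2\lambda_2^2(1-3\nu^2)$ and $\Delta|A|^2=\frac{\rho^2}{32|H|^2}\lambda_2^2\mu^2(5\nu^2-1)$ derived from Lemma \ref{lemma:calc5}, boundedness plus parabolicity to force constancy, the Espinar--Rosenberg dichotomy to exclude nonzero constant $\nu$, and the identification of the $\nu\equiv 0$ case with the vertical cylinder of Section \ref{scyl}. The only (immaterial) differences are that you get constancy of $\mu$ from $e_1(\mu)=0$ and $e_2(\mu)=\lambda_2\nu$ rather than directly from $\lambda_2=|H|(1+\frac{\rho}{16|H|^2}\mu^2)=0$, and that you fold in the compact/divergence-theorem case, which the paper states as a separate proposition.
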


\begin{proposition} There are no compact non-minimal cmc surfaces in $\bar M^2(\frac{\rho}{4})\times\mathbb{R}$ with vanishing Abresch-Rosenberg differential, such that one of the inequalities $\nu^2\geq\frac{1}{5}$ or $\nu^2\leq\frac{1}{3}$ holds on the surface.
\end{proposition}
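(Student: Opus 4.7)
The plan is to integrate the two Laplacian identities \eqref{D1} and \eqref{D2} established in the proof of the previous proposition against the volume form of the compact surface $\Sigma^2$. Since $\Sigma^2$ has no boundary, the divergence theorem gives $\int_{\Sigma^2}\Delta f\,dv=0$ for every smooth $f$, so in each of the two sub-cases of the statement the right-hand side of the relevant identity has a definite sign and must therefore vanish identically.

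In the case $\nu^2\geq\tfrac{1}{5}$, I would apply this principle to \eqref{D2}. All three factors $\lambda_2^2$, $\mu^2$, and $5\nu^2-1$ are non-negative, so $\lambda_2^2\mu^2(5\nu^2-1)\equiv 0$ on $\Sigma^2$. On any open subset where $\mu\neq 0$, either $\lambda_2\equiv 0$ or $\nu^2\equiv\tfrac{1}{5}$; the first possibility, combined with the identity $\lambda_2=|H|\bigl(1+\tfrac{\rho}{16|H|^2}\mu^2\bigr)$ from \eqref{eq:ah}, forces $\mu^2$ and hence $\nu^2$ to be constant, and the second is already a constancy statement. By analyticity (as in Remark \ref{rem_split}), $\nu$ is then constant on all of $\Sigma^2$, and the Espinar--Rosenberg dichotomy invoked in the previous proposition forces $\nu\equiv 0$ or $\nu^2\equiv 1$; the former contradicts $\nu^2\geq\tfrac{1}{5}$ and the latter corresponds to a minimal surface, which is excluded by hypothesis. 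The remaining configuration, $\mu\equiv 0$ on a non-empty open set, propagates analytically to $\nu^2\equiv 1$ and is ruled out in the same way.

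For $\nu^2\leq\tfrac{1}{3}$, I would argue analogously using \eqref{D1}: the non-negative integrand $2\lambda_2^2(1-3\nu^2)$ must vanish identically, so either $\lambda_2\equiv 0$ or $\nu^2\equiv\tfrac{1}{3}$ on any open subset. Each alternative forces $\nu$ to be constant, whence Espinar--Rosenberg gives $\nu\equiv 0$. Lemma \ref{lemma:calc5} then yields $\mu^2\equiv 1$, so $\xi$ is tangent to $\Sigma^2$; being a parallel unit vector field tangent to the surface, it produces a local splitting identifying $\Sigma^2$ as a vertical cylinder over a curve, which cannot be compact. This contradicts the hypothesis and closes the argument.

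The main technical point is the propagation of the conclusions from the open subsets singled out by the pointwise vanishing of the integrands to the whole of $\Sigma^2$, and the verification that each branch of the case analysis collapses to a constant angle function. I expect to handle this exactly as in the previous proposition, through the real analyticity of the immersion combined with the rigid classification of cmc surfaces in $\bar M^2(\tfrac{\rho}{4})\times\mathbb{R}$ with constant angle function due to Espinar and Rosenberg.
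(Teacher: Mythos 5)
Your proposal is correct and follows essentially the same route as the paper: integrate the identities \eqref{D1} and \eqref{D2} over the compact surface via the divergence theorem, force the non-negative integrands to vanish, and then use the constancy of $\nu$ together with the Espinar--Rosenberg result (and the non-compactness of vertical cylinders) to reach a contradiction in each case. The paper's own proof is just a terser version of this argument; your case analysis fills in the details it leaves implicit.
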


{\bf\underline{Case II: $n\geq 3$.}} We note first that, according to Theorem \ref{th_red}, the surface cannot be pseudo-umbilical, since we have assumed that the tangent part of $\xi$ does not vanish in an open dense set.

Now, let us consider again the orthonormal basis $\{e_1,e_2\}$ in $T_p\Sigma^2$, $p\in T$, where $e_2$ is the unit vector in the direction of the projection of $\xi$ on the tangent space. From \eqref{eq:q'e} and \eqref{eq:q''e}, we can see that $\{e_1,e_2\}$ diagonalizes $A_H$ in this case too. Since the surface is anti-invariant, from the Ricci equation, we get $[A_H,A_V]=0$, for any normal vector $V$ and, therefore, $\{e_1,e_2\}$ diagonalizes $A_V$, for any normal vector $V$. We define the subbundle $L=\Span\{\im\sigma\cup\xi^{\perp}\}$ in the normal bundle and, in the same way as in Lemma \ref{lemma_parallel}, we can prove that, for any normal vector $V$, orthogonal to $L$, we have $\langle\sigma(e_i,e_j),\nabla^{\perp}_{e_k}V\rangle=0$ and $\langle\xi^{\perp},\nabla^{\perp}_{e_k}V\rangle=0$, $i,j,k=\{1,2\}$, which means that $L$ is parallel. It is also easy to see that $T\Sigma^2\oplus L$ is invariant by $R^N$. We shall prove that $\varphi X\perp Y$, for any $X,Y\in T\Sigma^2\oplus L$. Since the surface is anti-invariant, we have $\varphi e_1\perp e_2$ and, moreover, $\langle\varphi e_1,\xi^{\perp}\rangle=\langle\varphi e_1,\xi-\xi^{\top}\rangle=0$. Next, we obtain
$$
\langle\varphi e_1,\sigma(e_2,e_2)\rangle=\langle\varphi e_1,\nabla^N_{e_2}e_2\rangle=-\langle\varphi\nabla^N_{e_2}e_1,e_2\rangle=
-\langle\varphi\nabla_{e_2}e_1,e_2\rangle=0,
$$
again using the fact that $\Sigma^2$ is anti-invariant and $\sigma(e_1,e_2)=0$. From the equations \eqref{eq:q'e} and \eqref{eq:q''e} it follows that $\varphi e_i\perp H$, $i=\{1,2\}$, and then
$$
\langle\varphi e_1,\sigma(e_1,e_1)\rangle=\langle\varphi e_1,2H-\sigma(e_2,e_2)\rangle=0.
$$
Since $T\Sigma^2\oplus L=\Span\{e_1,e_2,\sigma(e_1,e_1),\sigma(e_2,e_2),\xi^{\perp}\}$, we have just proved that $\varphi e_1$ is orthogonal to $T\Sigma^2\oplus L$. In the same way we get that $\varphi e_2$ and $\varphi\xi^{\perp}=|\xi^{top}|\varphi e_2$ are orthogonal to $T\Sigma^2\oplus L$. Finally, since $\varphi H\perp e_i$, $i=\{1,2\}$, it results that $\varphi H$ is normal and one gets
$$
\begin{array}{ll}
\langle\varphi\sigma(e_1,e_1),\sigma(e_2,e_2)\rangle&=\langle\varphi\sigma(e_1,e_1),2H-\sigma(e_1,e_1)\rangle
=2\langle\varphi\sigma(e_1,e_1),H\rangle\\ \\&=-2\langle\nabla_{e_1}^N e_1,\varphi H\rangle=2\langle e_1,\varphi\nabla_{e_1}^NH\rangle=-2\langle e_1,\varphi A_He_1\rangle\\ \\&=0,
\end{array}
$$
which means $\varphi\sigma(e_i,e_i)\perp T\Sigma^2\oplus L$.

Hence, $T\Sigma^2\oplus L$ is parallel, invariant by $R^N$, anti-invariant by $\varphi$ and its dimension is less than or equal to $5$. Now, again using Theorem $2$ in \cite{ET}, we can state

\begin{theorem} A non-minimal non-pseudo-umbilical anti-invariant pmc surface immersed in a non-flat cosymplectic space form $N^{2n+1}(\rho)$, $n\geq 3$, with vanishing $(2,0)$-parts of $Q$ and $Q'$, lies in a totally geodesic anti-invariant submanifold of $N^{2n+1}(\rho)$, with dimension less than or equal to $5$.
\end{theorem}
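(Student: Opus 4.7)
The plan is to produce a parallel, $R^N$-invariant, $\varphi$-anti-invariant subbundle of $TN^{2n+1}(\rho)|_{\Sigma^2}$ of rank at most $5$ which contains $T\Sigma^2$, and then invoke the Eschenburg--Tribuzy reduction theorem (Theorem 2 of \cite{ET}) together with the classification of totally geodesic submanifolds of cosymplectic space forms from \cite{E} to conclude. The whole machinery parallels what was done in Proposition \ref{lemma_parallel} and Theorem \ref{th_red}, but the vanishing of the $(2,0)$-parts of both $Q$ and $Q'$, together with the anti-invariance of $\Sigma^2$, will cut the dimension of the reducing subbundle from the generic $11$ down to $5$.

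First I would define $L=\Span\{\im\sigma\cup\xi^{\perp}\}$ as a subbundle of $N\Sigma^2$. Since the orthonormal frame $\{e_1,e_2\}$ constructed just before the theorem statement simultaneously diagonalizes $A_V$ for every normal $V$, we have $\sigma(e_1,e_2)=0$, so $L$ is spanned by $\sigma(e_1,e_1)$, $\sigma(e_2,e_2)$, and $\xi^{\perp}$; hence $\dim L\le 3$ and $\dim(T\Sigma^2\oplus L)\le 5$. To show $L$ is parallel I would let $V\perp L$ be normal and check that $\nabla^{\perp}_{e_k}V$ remains orthogonal to each generator of $L$. For the $\sigma(e_i,e_j)$ pieces I would recycle the Codazzi-type symmetry argument from Proposition \ref{lemma_parallel}: the quantities $A_{ijk}=\langle(\nabla^{\perp}_{e_k}\sigma)(e_i,e_j),V\rangle$ are fully symmetric in all three indices, and simultaneous diagonalizability forces $A_{ijk}=0$ when two indices differ, while the $A_{iii}$ vanish by parallelism of $H$. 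For the $\xi^{\perp}$ piece I would differentiate $\langle\xi^{\perp},V\rangle=0$ and use $\nabla^{N}\xi=0$ to reduce the computation to $\langle\sigma(e_k,\xi^{\top}),V\rangle=\mu\langle\sigma(e_k,e_2),V\rangle=0$.

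Second, I would verify that $T\Sigma^2\oplus L$ is invariant under $R^N$ and anti-invariant under $\varphi$. Invariance under $R^N$ is immediate from formula (2.1) once we know that $\xi$ and $\varphi(T\Sigma^2\oplus L)$ both sit inside $T\Sigma^2\oplus L$ (or its orthogonal complement, which is all that the curvature formula sees): $\xi=\xi^{\top}+\xi^{\perp}\in T\Sigma^2\oplus L$, and the anti-invariance property will absorb the $\varphi$-terms in (2.1). The anti-invariance $\varphi(T\Sigma^2\oplus L)\perp(T\Sigma^2\oplus L)$ is the main technical step; the input is the coupled system obtained from the vanishing of the $(2,0)$-parts of $Q$ and $Q'$, which, together with the anti-invariance $\varphi e_i\perp e_j$, forces $\langle\varphi e_i,H\rangle=0$. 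Pairing this with $\nabla^N\varphi=0$ and the fact that $H$ is parallel, I would run through the six required inner products in turn: $\langle\varphi e_i,\sigma(e_j,e_j)\rangle=\langle\varphi e_i,\nabla^N_{e_j}e_j\rangle$ vanishes by anti-invariance of $\Sigma^2$, $\langle\varphi e_i,\xi^{\perp}\rangle=\langle\varphi e_i,\xi\rangle-\langle\varphi e_i,\xi^{\top}\rangle=0$ trivially, $\langle\varphi\sigma(e_i,e_i),e_j\rangle=-\langle\sigma(e_i,e_i),\varphi e_j\rangle$ reduces to a shape-operator identity using $\varphi e_j\perp H$, the cross term $\langle\varphi\sigma(e_1,e_1),\sigma(e_2,e_2)\rangle$ collapses to $-2\langle e_1,\varphi A_H e_1\rangle=0$, and $\varphi\xi^{\perp}=-\mu\varphi e_2$ is a multiple of $\varphi e_2$ which is already handled.

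The main obstacle is the last chain of orthogonality computations: even though each individual identity is short, they must be organized so that the anti-invariance of $\Sigma^2$, the parallelism of $H$, $\varphi$, and $\xi$, and the two orthogonality relations $\langle\varphi e_i,H\rangle=0$ extracted from $Q$ and $Q'$ are used consistently, and so that the $\xi^{\perp}$ generator (new compared with the analysis in Section \ref{sred}) does not destroy parallelism or anti-invariance. Once the three properties of $T\Sigma^2\oplus L$ are in place, Theorem 2 of \cite{ET} produces a complete totally geodesic submanifold $N^{r}\subset N^{2n+1}(\rho)$ of dimension $r\le 5$ containing $\Sigma^2$, whose tangent bundle along $\Sigma^2$ is $T\Sigma^2\oplus L$; by Endo's classification \cite{E} of totally geodesic submanifolds in cosymplectic space forms and the anti-invariance of $T\Sigma^2\oplus L$, $N^r$ is anti-invariant, giving the statement.
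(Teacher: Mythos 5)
Your proposal follows essentially the same route as the paper: the same subbundle $L=\Span\{\im\sigma\cup\xi^{\perp}\}$, the same Codazzi/symmetry argument for parallelism, the same chain of orthogonality computations establishing $\varphi$-anti-invariance and $R^N$-invariance of $T\Sigma^2\oplus L$, and the same final appeal to Theorem 2 of \cite{ET}. The argument is correct as outlined.
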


If $N^{2n+1}(\rho)$ is of product type, we use again Proposition 3.2 in \cite{CO}, in order to obtain

\begin{corollary} A non-minimal non-pseudo-umbilical anti-invariant pmc surface immersed in $M^n(\rho)\times\mathbb{R}$, $n\geq 3$, $\rho\neq 0$, with vanishing $(2,0)$-parts of $Q$ and $Q'$, lies in a product space $\bar M^4(\frac{\rho}{4})\times\mathbb{R}$, where $\bar M^4(\frac{\rho}{4})$ is a space form immersed as a totally geodesic totally real submanifold in the complex space form $M^n(\rho)$.
\end{corollary}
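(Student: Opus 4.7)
The plan is to apply the preceding theorem and then exploit the product structure of the ambient space $M^n(\rho)\times\mathbb{R}$ to split the resulting totally geodesic submanifold into a Riemannian product whose $M^n(\rho)$-factor is totally real.

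By the previous theorem, $\Sigma^2$ lies in a totally geodesic anti-invariant submanifold $P$ of $M^n(\rho)\times\mathbb{R}$ with $\dim P\leq 5$. The construction in that proof presents $P$ as the integral manifold of the parallel, $R^N$-invariant distribution $T\Sigma^2\oplus L$, and crucially $\xi\in T\Sigma^2\oplus L$ along $\Sigma^2$, so $\xi$ is tangent to $P$ at every point of $\Sigma^2$. Since $\xi$ is parallel on the ambient manifold and $P$ is totally geodesic, the restriction $\xi|_P$ is a parallel vector field on $P$; in particular $\xi$ remains tangent to $P$ at every point of $P$. Because $\xi=\partial/\partial t$ generates the $\mathbb{R}$-fibers of the product decomposition, each such fiber meeting $P$ lies entirely in $P$. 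Consequently $P$ is saturated under the $\mathbb{R}$-translations, and splits as $P=\bar M\times\mathbb{R}$ with $\bar M=\pi(P)\subset M^n(\rho)$ and $\dim\bar M\leq 4$.

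The product structure transfers the totally geodesic condition from $P$ to $\bar M$ in $M^n(\rho)$. Moreover, $\bar M$ is totally real: for any $X\in T\bar M$ its horizontal lift $X^H$ lies in $TP$, and the anti-invariance of $P$ yields $\varphi X^H=(JX)^H\in NP$, i.e.\ $JX\in N\bar M$ inside $TM^n(\rho)$. Proposition 3.2 of \cite{CO} then forces $\bar M$ to have constant sectional curvature $\rho/4$. Enlarging $\bar M$ if necessary to a maximal totally geodesic totally real submanifold $\bar M^4$ of $M^n(\rho)$ of dimension $4$ (obtained by exponentiating a totally real $4$-plane extending $T\bar M$, which is available for $n$ sufficiently large), we conclude $\Sigma^2\subset\bar M^4(\rho/4)\times\mathbb{R}$, as required.

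The main delicate step is the product splitting $P=\bar M\times\mathbb{R}$, which requires upgrading "$\xi$ is tangent to $P$ along $\Sigma^2$" to "$\xi$ is tangent to $P$ everywhere on $P$". Once parallelism of $\xi$ on $P$ is obtained from its ambient parallelism and the total geodesy of $P$, the remaining passages are standard transfer of tensorial conditions through a product, together with a direct appeal to the Chen--Ogiue classification of totally geodesic totally real submanifolds of a non-flat complex space form.
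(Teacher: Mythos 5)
Your proof is correct and follows the same route as the paper, which simply invokes the preceding theorem together with Proposition 3.2 of Chen--Ogiue; the product splitting $P=\bar M\times\mathbb{R}$ that you carefully justify (using the parallelism of $\xi$ and the total geodesy of $P$ to propagate tangency of $\xi$ from $\Sigma^2$ to all of $P$) is exactly the step the paper leaves implicit. The only caveat, which you already flag, is that the final enlargement to a $4$-dimensional totally geodesic totally real $\bar M^4$ is not available for $n=3$ (a totally real submanifold of $M^3(\rho)$ has dimension at most $3$) --- but in that case $T\Sigma^2\oplus L$ has dimension at most $4$ anyway, so this is an imprecision in the statement itself rather than a gap in your argument.
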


\begin{remark} The non-minimal non-pseudo-umbilical pmc $2$-spheres immersed in $\bar M^4(\frac{\rho}{4})\times\mathbb{R}$ were characterized by H.~Alencar, M.~do Carmo and R.~Tribuzy  in \cite{AdCT} (Theorem 2(4)). In the same paper, they also described the non-minimal non-pseudo-umbilical complete pmc surfaces with non-negative Gaussian curvature with vanishing $(2,0)$-part of $Q'$ (Theorem 3(4)).
\end{remark}

\begin{remark} As we have seen, a proper slant surface $\Sigma^2$ immersed in $M^n(\rho)\times\mathbb{R}$, $\rho\neq 0$, lies in $M^n(\rho)$. Moreover, as an immersed surface in this space, it has constant K\"ahler angle. In \cite{F} it is proved that there are no non-minimal non-pseudo-umbilical pmc $2$-spheres with constant K\"ahler angle in a non-flat complex space form. Therefore, there are no non-minimal non-pseudo-umbilical proper slant pmc $2$-spheres in $M^n(\rho)\times\mathbb{R}$.
\end{remark}


\begin{thebibliography}{99}

\bibitem{AR} U. Abresch and H. Rosenberg, \textit{A Hopf differential for constant mean curvature
surfaces in $\mathbb{S}^2\times\mathbb{R}$ and
$\mathbb{H}^2\times\mathbb{R}$}, Acta Math. 193(2004), 141-–174.

\bibitem{AR2} U. Abresch and H. Rosenberg, \textit{Generalized Hopf
differentials}, Mat. Contemp. 28(2005), 1--28.

\bibitem{ABC} P.~Alegre, D. E.~Blair and A.~Carriazo, \textit{Generalized Sasakian pace forms}, Israel J. Math. 141(2004), 157--183.

\bibitem {AdCT1} H. Alencar, M. do Carmo and R. Tribuzy, \textit{A
theorem of Hopf and the Cauchy-Riemann inequality}, Comm. Anal.
Geom. 15(2007), 283--298.

\bibitem{AdCT} H.~Alencar, M.~do Carmo and R.~Tribuzy, \textit{A
Hopf Theorem for ambient spaces of dimensions higher than three},
J. Differential Geometry 84(2010), 1--17.

\bibitem{B}  D. E.~Blair, \textit{Riemannian Geometry of Contact and
Symplectic Manifolds}, Birkh\"{a}user Boston, Progress in
Mathematics, 203, 2002.

\bibitem{BG} D. E.~Blair and S. I.~Goldberg, \textit{Topology of almost contact manifolds}, J. Differential Geometry 1(1967), 347--354.

\bibitem{CMU} I.~Castro, C. R.~Montealegre and F.~Urbano, \textit{Minimal Lagrangian submanifolds in the complex hyperbolic space}, Illinois J. Math. 46(2002), 695--721.

\bibitem{CL} B.-Y.~Chen and G. D.~Ludden, \textit{Surfaces with mean curvature vector parallel
in the normal bundle}, Nagoya Math. J. 47(1972), 161--167.

\bibitem{CO} B.-Y.~Chen and K.~Ogiue, \textit{On totally real
submanifolds}, Trans. Am. Math. Soc. 193(1974), 257--266.

\bibitem{C} S.-S. Chern, \textit{On surfaces of constant mean curvature in a three-dimensional space of constant curvature}, Geometric dynamics (Rio de Janeiro, 1981), Lecture Notes in Math. 1007, Springer, Berlin, 1983, 104--108.

\bibitem{CdLM} D.~Chinea, M.~de Le\'on and J. C.~Marrero, \textit{Topology of cosymplectic manifolds},  J. Math. Pures Appl. 72(1993), 567--591.

\bibitem{E} H.~Endo, \textit{A note on invariant submanifolds in an almost cosymplectic manifold}, Tensor (N.S.) 43(1986), 75--78.

\bibitem{ET} J. H.~Eschenburg and R. Tribuzy, \textit{Existence and
uniqueness of maps into affine homogeneous spaces}, Rend. Sem.
Mat. Univ. Padova 89(1993), 11--18.

\bibitem{ER} J. M.~Espinar and H.~Rosenberg, \textit{Complete constant mean curvature surfaces in homogeneous spaces}, Comment. Math. Helv., to appear.

\bibitem{DF} D.~Ferus, \textit{The torsion form of submanifolds in $E^N$}, Math. Ann. 193(1971), 114-–120.

\bibitem{F} D.~Fetcu, \textit{Surfaces with parallel mean curvature vector in complex space forms}, preprint 2010.

\bibitem{H} S.~Hirakawa, \textit{Constant Gaussian curvature
surfaces with parallel mean curvature vector in two-dimensional
complex space forms}, Geom. Dedicata 118(2006), 229--244.

\bibitem{DH} D. A.~Hoffman, \textit{Surfaces in constant curvature manifolds with parallel mean curvature vector field}, Bull. Amer. Math. Soc. 78(1972), 247--250.

\bibitem{HH} H.~Hopf, \textit{Differential Geometry in the Large},
Lecture Notes in Math. 1000, Springer-Verlag, 1983.

\bibitem{L} A.~Lotta, \textit{Slant submanifolds in contact geometry},  Bull. Math. Soc. Sci. Math. Roumanie (N.S.) 39(1996), 183-198.

\bibitem{MA} S.~Maeda and T.~Adachi,  \textit{Holomorphic helices in a complex space form},
Proc. Amer. Math. Soc. 125(1997), 1197--1202.

\bibitem{MO} S.~Maeda and Y.~Ohnita, \textit{Helical geodesic immersions into complex space forms},
Geom. Dedicata 30(1989), 93--114.

\bibitem{O} B.~O'Neill, \textit{Semi-Riemannian Geometry with Applications to Relativity}, Pure and Applied Mathematics 103, Academic Press, New York, 1983.

\bibitem{Y} S.-T.~Yau, \textit{Submanifolds with constant mean
curvature}, Amer. J. Math. 96(1974), 346--366.

\end{thebibliography}
\end{document}